\DeclareMathOperator{\vol}{vol}
\DeclareMathOperator{\as}{a.s.}
\begin{document}

\def\mf{\mathfrak{M}}
\def\a{\mathcal{A}}
\def\b{\mathcal{B}}
\def\c{\mathcal{C}}
\def\d{\mathscr{D}}
\def\f{\mathcal{F}}
\def\g{\mathcal{G}}
\def\h{\mathcal{H}}
\def\i{\mathcal{I}}
\def\m{\mathcal{M}}
\def\n{\mathcal{N}}
\def\sp{\mathcal{P}}
\def\r{\mathcal{R}}
\def\s{\mathscr{S}}
\def\w{\mathcal{W}}
\def\x{\mathcal{X}}
\def\y{\mathcal{Y}}
\def\z{\mathcal{Z}}
\def\t{\mathcal{T}}

\def\A{\mathbb{A}}
\def\B{\mathbb{B}}
\def\C{\mathbb{C}}
\def\D{\mathbb{D}}
\def\E{\mathbb{E}}
\def\F{\mathbb{F}}
\def\H{\mathbb{H}}
\def\T{\mathbb{T}}
\def\L{\mathcal{L}}
\def\K{\mathbb{K}}
\def\N{\mathbb{N}}
\def\T{\mathbb{T}}
\def\P{\mathbb{P}}
\def\Q{\mathbb{Q}}
\def\R{\mathbb{R}}
\def\W{\mathbb{W}}
\def\Z{\mathbb{Z}}

\def\wstar{\overset{*}{\rightharpoonup}}
\def\nab{\nabla}
\def\dt{\partial_t}
\def\hal{\frac{1}{2}}
\def\ep{\varepsilon}
\def\vchi{\text{\large{$\chi$}}}
\def\ls{\lesssim}
\def\gs{\gtrsim}
\def\p{\partial}

\def\vn{\vol_n}
\def\om{\mu^\ast}

\def\lb{\log_b}

\def\wks{\buildrel\ast\over\rightharpoonup}
\def\wk{\rightharpoonup}

\def\fxx{\F\mspace{-3mu}\jump{\X}}
\def\fl{\F\mspace{-.5mu}(\mspace{-2mu}(\X)\mspace{-2mu})}
\def\afk{\mathbb{A}(\mathbb{K},\mathbb{F})}
\def\cf{\mathbb{C}(\F)}
\def\Re{\text{Re}}
\def\Im{\text{Im}}

\def\af{\mathfrak{A}}
\def\hf{\mathfrak{H}}
\def\ff{\mathfrak{F}}
\def\lf{\mathfrak{L}}
\def\bf{\mathfrak{B}}
\def\mf{\mathfrak{M}}
\def\pf{\mathfrak{P}}
\def\ep{\varepsilon}
\def\qed{{\hfill $\Box$ \bigskip}}
\def\st{\;\vert\;}
\def\vchi{\text{\large{$\chi$}}}

\newlist{anumerate}{enumerate}{1}
\setlist[anumerate,1]{label=(\alph*)}

\newcommand{\abs}[1]{\left\vert#1\right\vert}
\newcommand{\norm}[1]{\left\Vert#1\right\Vert}
\newcommand{\csubset}{\subset\subset}
\newcommand{\ind}[1]{\mathbbm{1}_{#1}}
\newcommand{\qnorm}[1]{\left \vert \mspace{-1.8mu} \left\vert
\mspace{-1.8mu} \left \lvert #1 \right \vert \mspace{-1.8mu} \right\vert
\mspace{-1.8mu} \right\vert}

\newcommand{\br}[1]{\langle #1 \rangle}
\newcommand{\ns}[1]{\norm{#1}^2}
\newcommand{\ip}[1]{\left(#1 \right)}
 
\newcommand{\jump}[1]{\left\llbracket #1 \right\rrbracket }

\newtheorem{thm}{Theorem}[section]
\newtheorem{cor}[thm]{Corollary}
\newtheorem{df}[thm]{Definition}
\newtheorem{assume}[thm]{Assumption}
\newtheorem{prop}[thm]{Proposition}
\newtheorem{rmk}[thm]{Remark}
\newtheorem{lem}[thm]{Lemma}
\newtheorem{ex}[thm]{Example}

\numberwithin{equation}{section}

\title[Concentration inequality for the optimal transport cost]{On concentration of the empirical measure for radial transport costs}

\author[M.\ Larsson, J.\,Park, J.\ Wiesel]{Martin Larsson \textsuperscript{1} , Jonghwa Park \textsuperscript{2}, Johannes Wiesel \textsuperscript{3}}
\address{\textsuperscript{1}
Department of Mathematical Sciences, Carnegie Mellon University, 5000 Forbes Ave, Pittsburgh, PA 15213, USA, larsson@cmu.edu}
\address{\textsuperscript{2}
Department of Mathematical Sciences, Carnegie Mellon University, 5000 Forbes Ave, Pittsburgh, PA 15213, USA, jonghwap@andrew.cmu.edu}
\address{\textsuperscript{3}
Department of Mathematical Sciences, Carnegie Mellon University, 5000 Forbes Ave, Pittsburgh, PA 15213,USA, wiesel@cmu.edu, Corresponding author.}
\subjclass[2023]{}

\begin{abstract}
    Let $\mu$ be a probability measure on $\R^d$ and $\mu_N$ its empirical measure with sample size $N$. We prove a concentration inequality for the optimal transport cost between $\mu$ and $\mu_N$ for radial cost functions with polynomial local growth, that can have superpolynomial global growth. This result generalizes and improves upon estimates of Fournier and Guillin.
    
    The proof combines ideas from empirical process theory with known concentration rates for compactly supported $\mu$. By partitioning $\R^d$ into annuli, we infer a global estimate from local estimates on the annuli and conclude that the global estimate can be expressed as a sum of the local estimate and a mean-deviation probability for which efficient bounds are known.

    \medskip
    
    \noindent\emph{Keywords:} empirical measures, Wasserstein distances, optimal transport cost, empirical process theory, concentration inequalities, polynomial local growth \\
\end{abstract}

\maketitle

\section{Introduction}\label{sec:intro}
Let $d\ge 1$. Denoting the space of probability measures on $\R^d$ by $\sp(\R^d)$, let us consider i.i.d samples $X_1, X_2, \dots$ of $\mu\in \sp(\R^d)$ which are defined on a common probability space $(\Omega, \P)$. We define their empirical measure with sample size $N$ by
\begin{equation} \label{eq_empirical_measure}
\mu_N:= \frac{1}{N} \sum_{i=1}^{N}\delta_{X_i}.
\end{equation}
This paper studies the convergence rate of \textit{the optimal transport cost} between $\mu$ and $\mu_N$. As an important example, we give rates for the cost $\t_p(\mu,\mu_N)$ defined via
\begin{align}\label{eq:for wp}
    \t_p(\mu, \nu)
    :=\inf_{\pi \in \text{Cpl}(\mu, \nu)}\int_{\R^d\times \R^d} \abs{x-y}^p\,d\pi(x,y)\,
\end{align}
for probability measures $\mu,\nu\in \sp(\R^d)$ and some $p>0$. Here $\abs{\cdot}$ refers to the $\ell^2$-norm on $\R^d$ and $\text{Cpl}(\mu, \nu)$ is the set of couplings of $\mu$ and $\nu$. The cost $\t_p$ is related to the $p$-Wasserstein distance $\w_p$ via $\t_p=\w_p^p$. Estimating the convergence rates of $\t_p(\mu, \mu_N)$ is a classical problem in many areas of mathematics, such as PDE and probability theory, and has been studied extensively in the literature. For example, moment bounds on $\t_p(\mu, \mu_N)$ can be found in \cite{HOROWITZ1994261, dereich2013constructive, mischler2013kac, boissard2014mean, fournier2015rate, 10.3150/18-BEJ1065, singh2019minimax, lei2020convergence}. Non-asymptotic concentration inequalities (or deviation inequalities) for $\t_p(\mu, \mu_N)$ were established by \cite{bolley2007quantitative, gozlan2007large, Boissard2011SimpleBF, fournier2015rate, dedecker2015deviation, lei2020convergence}.

To motivate our work, let us  consider a probability measure $\mu\in \sp(\R^d)$ that is supported on a closed ball of radius $1/2$ centered at $0$. It follows e.g.\ from \cite[Proposition 10]{fournier2015rate} that for all $N\in \N$ and $x>0$,
\begin{align}\label{eq:intro cpt}
    \P(\t_p(\mu, \mu_N)>x)\le Ce^{-cN\varphi(x)}\ind{\{x\le 1\}},
\end{align}
where the rate function $\varphi\colon [0,\infty)\to [0,\infty)$ is defined as
\begin{equation} \label{eq_rate_function}
    \varphi(x)
    :=\begin{dcases}
    x^2 & \text{ if } p>d/2\\
    (x/\log(2+1/x))^2 & \text{ if } p=d/2\\
    x^{d/p} & \text{ if } p\in (0,d/2).
    \end{dcases}
\end{equation}
In the above, the constants $c>0$ and $C>0$ only depend on $d$ and $p$, but not on $\mu$. 

Let us rewrite the deviation inequality \eqref{eq:intro cpt} in an alternative manner. Since $\mu$ is compactly supported on a ball of radius $1/2$, Hoeffding's lemma (see \cite[Theorem 2.1]{devroye2001combinatorial}) yields
\begin{align}
    \P(\abs{M_p(\mu)-M_p(\mu_N)}>x)
    \le 2e^{-2^{2p+1} N x^2}\,
\end{align}
where $M_p(\mu_N):=(1/N)\sum_{j=1}^{N}\abs{X_j}^p$ and $M_p(\mu):=\E[\abs{X_1}^p]$. Note that $2e^{-2^{2p+1}N x^2}$ can be absorbed into $Ce^{-cN\varphi(x)}$ if $x\in [0,1]$, possibly with different constants $c$ and $C$. Thus, up to a change of constants, the deviation inequality \eqref{eq:intro cpt} is equivalent to the estimate
\begin{align}\label{eq:intro grl}
    \P(\t_p(\mu, \mu_N)>x)\le Ce^{-cN\varphi(x)}\ind{\{x\le 1\}}
    +\P(\abs{M_p(\mu)-M_p(\mu_N)}>x).
\end{align}

Although the results in \cite{fournier2015rate} strongly suggest that \eqref{eq:intro grl} should remain true for a very large class of (not necessarily compactly supported) laws $\mu\in \sp(\R^d)$, the authors only prove weaker estimates. Indeed, \cite[Theorem 2]{fournier2015rate} is of the form
\begin{align}
    \P(\t_p(\mu, \mu_N)>x)
    \le C e^{-cN\varphi(x)}\ind{\{x\le 1\}}+b(N,x)\,
\end{align}
for some rate function $b(N,x)$. Compared to well-known upper bounds of $\P(\abs{M_p(\mu)-M_p(\mu_N)}>x)$, the rate function $b(N,x)$ yields less stringent estimates. In conclusion, concentration estimates provided in \cite{fournier2015rate} are generally weaker than \eqref{eq:intro grl}.

The aim of this paper is to prove the deviation inequality \eqref{eq:intro grl} in full generality. This will follow from a more general concentration inequality for cost functions that are \textit{locally polynomial of order $p$}. Informally it can be stated as follows. Consider a measurable cost function $f \colon [0,\infty) \to [0,\infty)$ such that $\sup_{0<r\le R} f(r)/r^p < \infty$ for all $R > 0$ and define
\begin{align}\label{eq:for d}
    \d_f(\mu, \nu)
    :=\inf_{\pi \in \text{Cpl}(\mu, \nu)}\int_{\R^d\times \R^d} f(\abs{x-y})\,d\pi(x,y) \text{ for } \mu, \nu\in \sp(\R^d).
\end{align}
Under mild moment conditions on $\mu$, we show that the concentration estimate \eqref{eq:intro grl} generalizes to $\d_f(\mu, \mu_N)$.

The contributions of this paper are threefold. First, we improve the deviation inequalities for $\t_p$ that were established in \cite{fournier2015rate}: on the one hand we relax the assumptions on $\mu$, thus obtaining \eqref{eq:intro grl} for a larger class of probability measures. On the other hand, when restricted to the same class of $\mu$ as in \cite{fournier2015rate}, we provide strictly stronger estimates. We mention one example here and refer to Section~\ref{sec:examples} for a detailed comparison with \cite{fournier2015rate}. If $\E[e^{a\abs{X_1}^{\beta}}]<\infty$ for some $a>0$ and $\beta\in (0,p)$, we prove that
\begin{align}
    \P(\t_p(\mu, \mu_N)>x)
    \le C(e^{-cN\varphi(x)}\ind{\{x\le 1\}}+e^{-c(Nx)^{\beta/p}}).
\end{align}
This implies the bound in \cite[Remark 3]{fournier2015rate}, namely
\begin{align}\label{eq:fg_rate_ex}
    \P(\t_p(\mu, \mu_N)>x)
    \le C\bigg(e^{-cN\varphi(x)}\ind{\{x\le 1\}}
    +e^{-cNx^2(\log(N+1))^{1-2p/\beta}}
    +e^{-c(Nx)^{\beta/p}}\bigg).
\end{align}
Our estimate improves upon \eqref{eq:fg_rate_ex} for small values of $x$. To see this, let $p>d/2$ and set $x_N=\sqrt{\log(N+1)/N}$. Our estimate then implies $\P(\t_p(\mu, \mu_N)>x_N)\to 0$ as $N\to \infty$ which does not follow from \eqref{eq:fg_rate_ex}.

Our second contribution is of methodological nature: we derive new concentration inequalities based on concepts from \textit{empirical process theory} and combine these with optimal transport techniques to derive sharper concentration inequalities. This method is \textit{robust} in the sense that it offers general estimates that apply to wide range of laws $\mu$, and can be seen as a generalization of techniques used in \cite{fournier2015rate}, where sophisticated control of binomial random variables played an essential role. In particular, the proof of \cite[Theorem 2]{fournier2015rate} requires three different techniques to control binomial random variables, each corresponding to three different assumptions imposed on $\mu$. In contrast, our paper is based on universal tools that control the \textit{uniform deviation of self-normalized empirical processes} and goes back to ideas formulated in \cite{vapnik1974theory, anthony1993result, bartlett1999inequality} and \cite[Exercise 3.3, Exercise 3.4]{devroye2001combinatorial}. Empirical process theory was also used in \cite{manole2021sharp}, albeit in a different way.

Finally, our paper contains sharp moment bounds for some non-Wasserstein costs $\d_f\neq \t_p$. To give an example, let us define the transport cost $\mathcal{E}_{p,a}$ for $a,p>0$ via
\begin{align}\label{eq:for exp}
    \mathcal{E}_{p,a}(\mu, \nu)
    :=\inf_{\pi \in \text{Cpl}(\mu, \nu)}\int_{\R^d\times \R^d} \left(e^{a\abs{x-y}^p}-1\right)\,d\pi(x,y)\text{ for } \mu, \nu\in \sp(\R^d).
\end{align}
Assume that $\E[e^{b\abs{X_1}^p}]<\infty$ for $b>2^{2p+1}a$ if $p\ge 1$, and for $b>2^{p+2}a$ if $p\in (0,1)$. We then prove the moment bounds
\begin{align}
    \E[\mathcal{E}_{p,a}(\mu, \mu_N)]
    \le C
    \begin{dcases}
     N^{-1/2} & \text{ if } p>d/2\,\\
    \log(N+1)N^{-1/2} & \text{ if } p=d/2\,\\
    N^{-p/d} & \text{ if } p\in (0,d/2),
    \end{dcases}
\end{align}
see Examples~\ref{ex:mom expdhp} and~\ref{ex:mom expdlp}. The sharpness of these bounds is discussed at the end of Section~\ref{sec:examples}.

\subsection{Related work}
In this section we provide a short overview of existing work on Wasserstein rates between the true and empirical measure, departing from \cite{fournier2015rate}.

\subsubsection*{Moment bounds}
Moment bounds for $\mathcal{W}_p(\mu, \mu_N)$ are extended from Euclidean spaces to arbitrary compact metric spaces in \cite{10.3150/18-BEJ1065}, where it is shown that the rate of convergence depends on the so-called intrinsic dimension of the measure $\mu$. An extension of moment bounds to unbounded Banach spaces such as separable Hilbert spaces is carried out in \cite{lei2020convergence}. Using minimax theory, \cite{singh2019minimax} gives a different proof of the bounds of \cite{fournier2015rate}.

There has been some recent effort in making the generic constants appearing in the moment bounds explicit, see \cite{kloeckner2020empirical} and \cite{fournier2022convergence}. \cite{kloeckner2020empirical} derives explicit bounds for $1$-Wasserstein distances when the $\mu$ is supported on $[0,1]^d$. \cite{fournier2022convergence} covers the general case, and shows that constants can be chosen in such a way that they do not explode for $d\to \infty$.

\subsubsection*{Concentration estimates} Concerning concentration inequalities, fewer results are known. They are mostly derived from estimates of the deviation of $\w_p(\mu, \mu_N)$ around its mean, i.e.\ $$\P(\abs{\w_p(\mu, \mu_N)-\E[\w_p(\mu, \mu_N)]}>x).$$ For measures on Polish spaces, \cite{dedecker2015deviation} deduce mean-concentration inequalities for $1$-Wasserstein distances from Lipschitz duality. \cite{lei2020convergence} obtains explicit inequalities for measures satisfying a Bernstein-type tail condition. All of these results impose certain quantitative conditions on the moments of $\mu$, which are more restrictive than the conditions required in this paper.

\subsubsection*{Empirical process theory} The recent work \cite{manole2021sharp} incorporates empirical process theory into the analysis of plug-in estimators for $\d_f(\mu, \nu)$, where $\mu, \nu\in \sp(\R^d)$ are two probability distributions of interest. In practice, $\mu$ and $\nu$ are often unknown and need to be approximated by their empirical counterparts $\mu_N$ and $\nu_N$. Motivated by this, \cite{manole2021sharp} study the expected error between the optimal cost $\d_f(\mu, \nu)$ and its plug-in estimator $\d_f(\mu_N, \nu_N)$. Their methodology relies on empirical process theory for the \textit{uniform deviation}, developed in \cite{von2004distance}. For the special case $\d_f=\w^p_p$, they establish faster convergences rates compared to the ones obtained by applying the triangle inequality to the rate of \cite{fournier2015rate}.

However, their results are based on certain regularity conditions on the cost function $f$ as well as global growth conditions on the derivative $f'$. These conditions are stronger than ours. In particular, they exclude the exponential cost $\mathcal{E}_{p,a}$. Furthermore,  \cite{manole2021sharp} makes stronger assumptions on the measures $\mu$ and $\nu$, which are required to satisfy a sub-Weibull condition (implying finite exponential moments) and have densities with polynomially growing logarithmic gradients.

\subsection{Organization of the paper}
The rest of the paper is organized as follows. We end this introduction by establishing notation in Subsection~\ref{S_notation}. In Section~\ref{sec:main results}, we discuss our main assumptions and present our main result, Theorem~\ref{thm:goal}. In Section~\ref{sec:examples}, we elaborate on various examples. In particular we compute concentration inequalities for $\t_p$ and compare them with existing results. Estimates for $\mathcal{E}_{p,a}$ are also presented in this section. We test our main result numerically in Section \ref{sec:numericaltest} for various distributions $\mu$. Section~\ref{sec:proof} is devoted to proofs.

\subsection{Notation} \label{S_notation}
We denote the set of non--negative integers by $\N$. As usual, we write $x_+:=\max(x,0)$ for any real number $x$. All random objects are defined on a sample space $(\Omega,\P)$ with expectation operator $\E$. The empirical measure $\mu_N$ of a probability measure $\mu\in \sp(\R^d)$ based on a sample size $N\ge 1$ is defined in \eqref{eq_empirical_measure}. For technical reasons we also define $\mu_N:=\delta_0$ if $N=0$, where $\delta_0$ denotes the Dirac measure at $0$. We also recall that $\t_p$, $\d_f$ and $\mathcal{E}_{p,a}$ are defined in \eqref{eq:for wp}, \eqref{eq:for d} and \eqref{eq:for exp}, respectively.

Let $\mu\in \sp(\R^d)$. For $q>0$ and a measurable function $h \colon [0,\infty)\to [0,\infty)$, we denote its moments by
\begin{align}
    M_q(\mu):=\int_{\R^d} \abs{y}^q\,d\mu(y),\quad
    M_q(\mu; h):=\int_{\R^d}h(\abs{y})^q\,d\mu(y).
\end{align}
In particular, $M_1(\mu; h)$ is the integral of $h(\abs{\cdot})$ with respect to $\mu$.

Next, the rate function $\varphi$ is defined in \eqref{eq_rate_function}. Note that $\varphi$ depends on the dimension $d$ and a growth parameter $p>0$, even though this is not explicit in the notation. Recall from \eqref{eq:intro cpt} that $\varphi$ controls the rate of concentration for compactly supported measures. For general probability measures $\mu\in \sp(\R^d)$, we will work with the following variation of $\varphi$. We set
\begin{equation} \label{eq_phi_eta}
    \varphi_{\eta}(x)
    :=\begin{dcases} x^{1/\eta} &\text{ if } p\neq d/2,\,\\
    x^{1/\eta}/(\log(2+1/x))^2 &\text{ if } p=d/2\,
    \end{dcases}
\end{equation}
for $\eta\in (0, \frac{1}{2} \wedge \frac{p}{d}]$. Note that $\varphi_{\eta}=\varphi$ when $\eta=\frac{1}{2} \wedge \frac{p}{d}$.

\section{Main result}\label{sec:main results}

We now introduce our main assumptions and present our main result. Throughout this paper, we focus on the optimal transport cost $\d_f$ for a cost function $f$ and a measure $\mu \in \sp(\R^d)$. We now detail the assumptions on $f$ and $\mu$, which will be central for our main result.

\begin{assume}\label{ass:cst} Fix a dimension $d \ge 1$ and a growth parameter $p \in (0,\infty)$. The cost function $f\colon [0,\infty)\to [0,\infty)$ and the measure $\mu\in \sp(\R^d)$ satisfy the following hypotheses:
\begin{anumerate}
    \item(Continuity)\label{ass:cont} $f$ is lower semicontinuous.
    \item(Growth)\label{ass:loc g} $f$ satisfies
    \begin{equation} \label{eq_local_growth}
    \sup_{0 < r \le R} \frac{f(r)}{r^p} < \infty \text{ for all } R \in (0,\infty).
    \end{equation}
    We then fix a measurable function $g \colon [0,\infty)\to [0,\infty)$ and a nondecreasing function $G \colon [0,\infty)\to [0,\infty)$ such that
    \begin{align}
        f(R\abs{x})\le g(R)\abs{x}^p \text{ for every } x \in \R^d \text{ with } \abs{x}\le 1
    \end{align}
    and
    \begin{align}
        f(\abs{x-y})\le G(\abs{x})+G(\abs{y}) \text{ for every } x,y\in \R^d.
    \end{align}
    Such functions can always be found when \eqref{eq_local_growth} is satisfied, for example $g(R) = R^p \sup_{0 < r \le R} f(r) / r^p$ and $G(R) = \sup_{r \le 2R} f(r)$, although other choices are sometimes preferable.

    \item(Moment conditions)\label{ass:mom} 
    \begin{enumerate}[label=(c\arabic*)]
        \item $M_{p}(\mu)<\infty$ and $M_{\gamma}(\mu; G)<\infty$ for some $\gamma\in (1,\infty)$.
        \item There exists a nondecreasing function $S\colon [0,\infty)\to (0,\infty)$ such that $M_1(\mu; S)<\infty$ and
        \begin{align}
            \mathcal{K}_{g}:=g(2)+\sum_{k=1}^{\infty}\frac{2^{kc_0}g(2^{k+1})}{S(2^{k-1})^{1-\eta}}<\infty,\quad
            \mathcal{K}_{G}:=G(1)+\sum_{k=1}^{\infty}\frac{2^{kc_0}G(2^k)}{S(2^{k-1})^{1-\eta}}<\infty\,
        \end{align}
        for some $c_0>0$ and some $\eta\in (0, \frac{1}{2} \wedge \frac{p}{d}]$, i.e.\ $\eta\in (0,1/2]$ if $p\ge d/2$ and $\eta\in (0,p/d]$ if $p\in (0,d/2)$.
    \end{enumerate}
\end{anumerate}
\end{assume}

The role of the continuity condition~$\ref{ass:cont}$ is to ensure that an optimal transport plan for $\d_f$ exists; see \cite[Theorem 4.1]{villani2009optimal} for details. The growth condition~$\ref{ass:loc g}$ states that $f(r)$ is locally bounded for large $r$ and decays to zero at least as fast as $r^p$ for small  $r$.

There are many interesting functions $f$ that satisfy conditions~$\ref{ass:cont}$ and~$\ref{ass:loc g}$. One obvious example is $f(r)=r^p$. In this case, the optimal transport cost is $\d_f=\t_p$ and we may choose $g=f$ and $G(r)=c_p r^p$ where $c_p:=2^{p-1}$ if $p\ge 1$ and $c_p:=1$ if $p\in (0,1)$. Another interesting example is the exponential function $f(r)=e^{ar^p}-1$ for some $a>0$. In this case we have $\d_f=\mathcal{E}_{p,a}.$ Using the fact that $y\mapsto \frac{e^{a y}-1}{y}$ is increasing, we can take $g=f$. Also, thanks to Jensen's inequality, we can choose $G(r)=\frac{1}{2}(e^{2c_pa r^p}-1)$, where $c_p$ is as above.

One intuitive way to understand moment condition~$\ref{ass:mom}$ is to view $\gamma$ and $\eta$ as \textit{indicators of how stringent the assumptions on $\mu$ are}. More precisely, larger values of $\gamma$ and $\eta$ indicate stronger assumptions on $\mu$ and vice versa. This is obvious for $\gamma$. For $\eta$, the argument is as follows: to satisfy~$\ref{ass:mom}$ we will choose the largest possible function $S$ that is $\mu$-integrable. As $S$ gets larger, we are allowed to choose larger $\eta$ that makes both $\mathcal{K}_g$ and $\mathcal{K}_{G}$ finite. Hence, a larger value of $\eta$ indicates stronger assumptions on moments of $\mu$. For example, take $f(r)=r^p$ and consider $\d_f=\t_p$. Assume that $M_q(\mu)<\infty$ for $q>p$ and set $S(r)=1\vee r^{q}$. Then
\begin{align}\label{eq:kforw}
    \mathcal{K}_g=2^p+\sum_{k=1}^{\infty}\frac{2^{kc_0}2^{p(k+1)}}{(1\vee 2^{q(k-1)})^{1-\eta}},\quad
    \mathcal{K}_G=1+\sum_{k=1}^{\infty}\frac{2^{kc_0}c_p2^{kp}}{(1\vee 2^{q(k-1)})^{1-\eta}}\,
\end{align}
and we compute that $\mathcal{K}_g<\infty$ and $\mathcal{K}_G<\infty$ if and only if $q>p/(1-\eta)$. In turn, larger $\eta$ means $M_q(\mu)<\infty$ for larger $q$ and vice versa.

\begin{rmk}
It is worth mentioning that the condition $M_{\gamma}(\mu; G)<\infty$ for $\gamma>1$ implies that $\d_f(\mu, \mu_N)<\infty \,\,\as$ Furthermore, the moment condition $M_p(\mu)<\infty$ could be relaxed to requiring $M_r(\mu) < \infty$ for some $r \in (0,\infty)$ allowed to be strictly smaller than $p$.
\end{rmk}

Let us now state our main result.

\begin{thm}\label{thm:goal} Let us assume that Assumption~\ref{ass:cst} is satisfied. Then the following estimates hold.
\begin{anumerate}
    \item Let $\gamma>2$. Then there exist positive constants $c, C, A_0>0$ such that for all $N\in \N$ and $x>0$,
    \[
        \P(\d_f(\mu, \mu_N)>Fx)
        \le Ce^{-cN\varphi_{\eta}(x)}\ind{\{x\le A_0\}}
        +\P\left(M_1(\mu_N; G)-M_1(\mu; G)>x\right)
    \]
    and
    \[
        \P(\d_f(\mu, \mu_N)>F_Nx)
        \le Ce^{-cN\varphi_{\eta}(x)}\ind{\{x\le A_0\}}
        +\P\left(M_1(\mu; G)-M_1(\mu_N; G)>x\right),
    \]
    where
    \begin{align}
        F&:=(1\vee M_1(\mu; S))^{1-\eta}
        +(1\vee M_{p}(\mu))^{1/2-1/\gamma}(M_{\gamma}(\mu; G))^{1/\gamma},\\
        F_N&:=(1\vee M_1(\mu_N; S))^{1-\eta}
        +(1\vee M_{p}(\mu_N))^{1/2-1/\gamma}(M_{\gamma}(\mu_N; G))^{1/\gamma}.
    \end{align}
    Here, the constant $c$ depends only on $d,p, \gamma, \eta, \mathcal{K}_g, \mathcal{K}_G$, the constant $C$ depends only on $d, p, \gamma ,\delta_0$ and the constant $A_0$ depends only on $p,\gamma, \mathcal{K}_g, \mathcal{K}_G$.
    \item Let $\gamma\in (1,2]$ and fix $\ep\in (0, 1-1/\gamma)$. For some positive constants $c, C, A_0>0$, one has for all $N\in \N$ and $x>0$,
    \begin{align}
        &\P(\d_f(\mu, \mu_N)>Fx)\\
        &\quad \le C\left(e^{-cN\varphi_{\eta}(x)}+e^{-c N^{2(1-1/\gamma-\ep)}x^2}\right)\ind{\{x\le A_0\}}
        +\P\left(M_1(\mu_N; G)-M_1(\mu; G)>x\right)
    \end{align}
    and
    \begin{align}
        &\P(\d_f(\mu, \mu_N)>F_Nx)\\
        &\quad \le C\left(e^{-cN\varphi_{\eta}(x)}+e^{-c N^{2(1-1/\gamma-\ep)}x^2}\right)\ind{\{x\le A_0\}}
        +\P\left(M_1(\mu; G)-M_1(\mu_N; G)>x\right),
    \end{align}
    where
    \begin{align}
        F&:=(1\vee M_1(\mu; S))^{1-\eta}
        +(1\vee M_{p}(\mu))^{\ep}(M_{\gamma}(\mu; G))^{1/\gamma},\\
        F_N&:=(1\vee M_1(\mu_N; S))^{1-\eta}
        +(1\vee M_{p}(\mu_N))^{\ep}(M_{\gamma}(\mu_N; G))^{1/\gamma}.
    \end{align}
    Here, the constants $c, C$ and $A_0$ depend only on both $\ep$ and the same set of parameters as in the previous case.
\end{anumerate}
\end{thm}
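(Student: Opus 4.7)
The plan is to reduce the global deviation $\P(\d_f(\mu,\mu_N) > Fx)$ to a combination of (i) local deviations on dyadic annuli, to which the compactly-supported estimate \eqref{eq:intro cpt} applies after rescaling, and (ii) a single global deviation of $M_1(\mu_N;G)$ from $M_1(\mu;G)$, which accounts for the last term in the bound. Concretely, I would partition $\R^d$ into the ball $C_0=\{\abs{x}\le 1\}$ and the dyadic annuli $C_k=\{2^{k-1}<\abs{x}\le 2^k\}$ for $k\ge 1$. Writing $p_k=\mu(C_k)$, $q_k=\mu_N(C_k)$ and $m_k=\min(p_k,q_k)$, and denoting by $\mu^{(k)},\mu_N^{(k)}$ the corresponding conditional measures, one constructs an admissible coupling in two stages: transport the common mass $m_k$ on each annulus internally, and transport the excess mass arbitrarily across annuli. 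Using~\ref{ass:loc g}, this yields the deterministic bound
\begin{align*}
\d_f(\mu,\mu_N) \le \sum_{k\ge 0} m_k\, g(2^{k+1})\, \t_p\bigl(\tilde\mu^{(k)},\tilde\mu_N^{(k)}\bigr) + \sum_{k\ge 0}\bigl(G(2^k)+G(1)\bigr)\abs{p_k-q_k},
\end{align*}
where $\tilde\mu^{(k)}$ denotes $\mu^{(k)}$ rescaled to the unit ball. The second sum is comparable, up to an absolute constant depending only on $G$, to $\abs{M_1(\mu_N;G)-M_1(\mu;G)}$, which is exactly the last term in the theorem; separating the two directions gives the two inequalities stated.

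The heart of the argument is bounding the first sum. Conditionally on the annulus counts $N_k=Nq_k$, the compactly-supported concentration \eqref{eq:intro cpt} from \cite{fournier2015rate} applied on the unit ball gives
\begin{align*}
\P\bigl(\t_p(\tilde\mu^{(k)},\tilde\mu_N^{(k)})>y_k \mid N_k\bigr) \le C e^{-c N_k \varphi(y_k)}\ind{\{y_k\le 1\}}.
\end{align*}
Naive union bounds over $k$ lose too much, so one has to calibrate the thresholds $y_k$ carefully: roughly $y_k \asymp x\, S(2^{k-1})^{-(1-\eta)}/g(2^{k+1})$, so that $\sum_k m_k g(2^{k+1}) y_k$ is comparable to $x\cdot (1\vee M_1(\mu;S))^{1-\eta}$, while each tail event contributes at most $e^{-cN\varphi_\eta(x)}$ after absorbing the factor $S(2^{k-1})^{1-\eta}$. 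The finiteness of $\mathcal{K}_g$ and $\mathcal{K}_G$ is precisely what makes the series telescope, and the parameter $\eta$ is what controls the transition from $\varphi$ (compactly supported case) to $\varphi_\eta$. Here the self-normalized empirical-process techniques from \cite{devroye2001combinatorial, anthony1993result, bartlett1999inequality} are essential, because they allow replacing the deterministic $Np_k$ by the random $N_k$, which in turn allows normalization by the empirical moment $M_1(\mu_N;S)$ and produces the $F_N$ version of the bound via the same argument.

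The distinction between $\gamma>2$ and $\gamma\in(1,2]$ arises from how one treats annuli at large $k$, where $g(2^{k+1})$ may outrun what in-annulus concentration delivers and must instead be controlled using the $\gamma$-th moment $M_\gamma(\mu;G)$. For $\gamma>2$, a second-moment (Bernstein/Rosenthal-type) bound yields the clean exponent $1/2-1/\gamma$ appearing in $F$; for $\gamma\le 2$ only a weaker Markov-type bound is available, which forces the substitution of $1/2-1/\gamma$ by an arbitrary $\ep\in(0,1-1/\gamma)$ and produces the extra Gaussian factor $e^{-cN^{2(1-1/\gamma-\ep)}x^2}$ absent in the first case. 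The main obstacle, and the place where the paper's methodological contribution sits, is the calibration of the thresholds $y_k$ in the second step: the constants $\mathcal{K}_g,\mathcal{K}_G$ and the rate $\varphi_\eta$ must emerge simultaneously from the summation without logarithmic loss in the critical case $p=d/2$, and the conditioning on $(N_k)$ must be handled via a uniform self-normalized empirical-process inequality rather than term-by-term.
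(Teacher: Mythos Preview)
Your architecture---dyadic annuli, conditioning on the annulus counts, in-annulus compact concentration, and a self-normalized empirical-process bound---matches the paper. The genuine gap is in your treatment of the cross-annulus term. The claim that $\sum_{k}(G(2^k)+G(1))\,|p_k-q_k|$ is ``comparable, up to an absolute constant depending only on $G$, to $|M_1(\mu_N;G)-M_1(\mu;G)|$'' is false: the left side is a sum of nonnegative terms while the right side has cancellations (take $G$ constant on the support of $\mu$ to see the left side positive and the right side zero). The paper does \emph{not} collapse the cross-annulus cost to a single moment deviation. Instead, using $(a-b)_+=a-a\wedge b$, it splits $\int G\,d\nu+\int G\,d\lambda$ into three pieces,
\[
\text{II}=\sum_k m_k\bigl(M_1(\mu^k;G)-M_1(\mu_N^k;G)\bigr),\qquad
\text{III}=\sum_k (p_k-q_k)_+\,M_1(\mu^k;G),\qquad
\text{IV}=M_1(\mu_N;G)-M_1(\mu;G),
\]
so that $\d_f(\mu,\mu_N)\le \text{I}+\text{II}+2\,\text{III}+\text{IV}$. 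Only IV is the signed moment difference. Term II is another in-annulus term, handled by Hoeffding exactly like I (this is where $\mathcal{K}_G$ is used). Term III is the essential new piece and is precisely where the self-normalized empirical-process inequality enters: one bounds $\sup_k 2^{-k\delta}(p_k-q_k)_+/p_k^{\alpha}$ with $\alpha=1/\gamma+\ep$, and a subsequent H\"older step against $p_k^{1/\gamma}M_1(\mu^k;G)$ (then Jensen) produces the factor $(1\vee M_p(\mu))^{\ep}(M_\gamma(\mu;G))^{1/\gamma}$ in $F$ together with the exponent $N^{2(1-\alpha\vee 1/2)}$ that distinguishes $\gamma>2$ (where one may take $\ep=1/2-1/\gamma$, so $\alpha=1/2$) from $\gamma\le 2$.

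Two further corrections follow from this. First, the self-normalized bound is \emph{not} used to ``replace the deterministic $Np_k$ by the random $N_k$'' in the in-annulus estimate; that step is done by straight conditioning on $\g_N$, and the correct threshold there is $y_k=x/m_k^{\eta}$ (not $x\,S(2^{k-1})^{-(1-\eta)}/g(2^{k+1})$), because it is the inequality $q_k\,\varphi(x/q_k^{\eta})\ge \varphi_\eta(x)$ that converts $e^{-cN_k\varphi(y_k)}$ into $e^{-cN\varphi_\eta(x)}$. The function $S$ only enters afterwards, deterministically, via $m_k^{1-\eta}\le p_k^{1-\eta}\le (M_1(\mu;S)/S(2^{k-1}))^{1-\eta}$ when summing $\sum_k 2^{kc_0}g(2^{k+1})m_k^{1-\eta}\le \mathcal{K}_g(1\vee M_1(\mu;S))^{1-\eta}$. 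Second, the $\gamma$-dichotomy does not come from ``annuli at large $k$ where $g(2^{k+1})$ outruns concentration''; it comes entirely from the choice of the self-normalization exponent $\alpha$ in the bound on term III. The $F_N$ version is obtained by rerunning the whole argument with the roles of $\mu$ and $\mu_N$ exchanged (using the companion self-normalized inequality with $q_k^{\alpha}$ in the denominator), not by a device inside the in-annulus step.
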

Naturally, our estimates improve with the magnitude of $\gamma$ and $\eta$. Indeed, if $x\le A_0$ and $\eta_1>\eta_2$, then $\varphi_{\eta_1}(x)\ge c\varphi_{\eta_2}(x)$ for some generic constant $c>0$. This observation is consistent with our previous interpretation that larger values of $\gamma$ and $\eta$ are indicating stronger moment conditions on $\mu$. 

The above estimates are stated in terms of the deviation between the true mean $M_1(\mu; G)$ and the empirical mean $M_1(\mu_N; G)$. Deviation estimates for these quantities are well-studied, see e.g.\ Proposition~\ref{prop:dvp} below for a summary.

\begin{rmk}
In a statistical inference framework, the theoretical moments of the population in the inequalities above are typically unknown and have to be replaced by their empirical counterparts. In particular, $F$ is approximated by $F_N$. Since the constants $c, C$ and $A_0$ do not depend on the moments of $\mu$, the second estimates for each case in Theorem~\ref{thm:goal} above may be useful in this case.
\end{rmk}

\section{Examples}\label{sec:examples}
In order to emphasize the flexibility of the concentration bounds stated in Theorem~\ref{thm:goal}, we now explicitly compute them for various functions $f$. To achieve this we need to estimate the deviation of the difference between $M_1(\mu; G)$ and $M_1(\mu_N; G)$. 

Recall that $X_1, \dots, X_N$ are i.i.d.~samples of $\mu$ and set $Y_i:=G(\abs{X_i})$. Then $Y_i$ are i.i.d.~random variables with mean $\E[Y_1]=M_1(\mu; G)$ and $M_1(\mu_N; G)$ is the empirical mean of $Y_i$. In consequence we need to estimate the difference between the empirical mean and the true mean of $Y_i$. This problem is well-studied in the literature, and some well-known results can be found in \cite{fournier2015rate}. We complement these in the next proposition below.

\begin{prop}\label{prop:dvp} Let $Y_1,Y_2,\dots$ be i.i.d.~random variables and define $\overline{Y}_N:=(1/N)\sum_{i=1}^N Y_i$.
\begin{anumerate}
    \item\label{prop:eg} Suppose $\E[e^{a\abs{Y_1}^{\beta}}]<\infty$ for $a>0$ and $\beta\ge 1$. Then for all $N\in \N$ and $x>0$,
    \begin{align}
        \P\left(\abs{\overline{Y}_N-\E[Y_1]}>x\right)
        \le C\left(e^{-cNx^2}\ind{\{x\le 1\}}
        +e^{-c N x^{\beta}}\ind{\{x>1\}}\right)\,
    \end{align}
    for some positive constants $c$ and $C$ that depend only on $a, \beta, \E[Y_1]$ and $\E[e^{a\abs{Y_1}^{\beta}}]$.
    \item\label{prop:el} Suppose $\E[e^{a\abs{Y_1}^{\beta}}]<\infty$ for $a>0$ and $\beta\in (0,1)$. Then for all $N\in \N$ and $x>0$,
    \begin{align}
        \P\left(\abs{\overline{Y}_N-\E[Y_1]}>x\right)
        \le C\left(e^{-cNx^2}
        +e^{-c (N x)^{\beta}}\right)\,
    \end{align}
    for some positive constants $c$ and $C$ that depend only on $a, \beta, \E[Y_1]$ and $\E[e^{a\abs{Y_1}^{\beta}}]$.
    \item\label{prop:mg} Suppose $\E[\abs{Y_1}^t]<\infty$ for $t>2$. Then for all $N\in \N$ and $x>0$,
    \begin{align}
        \P\left(\abs{\overline{Y}_N-\E[Y_1]}>x\right)
        \le e^{-cNx^2}
        +CN(Nx)^{-t}\,
    \end{align}
    for some positive constants $c$ and $C$ that depend only on $t$ and $\E[\abs{Y_1}^t]$.
    \item\label{prop:ml} Suppose $\E[\abs{Y_1}^t]<\infty$ for $t\in [1, 2]$. Then for all $N\in \N$ and $x>0$,
    \begin{align}
        \P\left(\abs{\overline{Y}_N-\E[Y_1]}>x\right)
        \le CN(Nx)^{-t}\,
    \end{align}
    for some positive constant $C$ that depends only on $t$ and $\E[\abs{Y_1}^t]$.
\end{anumerate}
\end{prop}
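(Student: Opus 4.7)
The plan is to reduce all four parts to a Chernoff-type argument combined with a truncation step, varying only the truncation level and the choice of input to Chernoff according to the integrability hypothesis. For a truncation level $M>0$ to be chosen, I decompose $Y_i=Y_i\ind{\abs{Y_i}\le M}+Y_i\ind{\abs{Y_i}>M}$ and bound
\[
\P(\abs{\overline Y_N-\E Y_1}>x)\le \P\bigl(\abs{\overline Y_N^{M}-\E Y_1^{M}}>x/2\bigr)+N\P(\abs{Y_1}>M)+\ind{\abs{\E Y_1-\E Y_1^M}>x/2},
\]
where $Y_i^M=Y_i\ind{\abs{Y_i}\le M}$. The last indicator vanishes once $M$ is chosen large enough, and the middle term is bounded using the tail estimate implied by the hypothesis. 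The first term is handled by Bennett/Bernstein applied to the bounded variables $Y_i^M-\E Y_1^M$.

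For part~\ref{prop:eg} with $\beta\ge 1$, the hypothesis gives $\E[e^{\lambda Y_1}]<\infty$ in a neighborhood of $0$ (since $\abs{y}\le 1+\abs{y}^\beta$), so a direct Chernoff bound $\P(\overline Y_N-\E Y_1>x)\le \inf_{\lambda}\exp\bigl(-N[\lambda x-\log\E e^{\lambda(Y_1-\E Y_1)}]\bigr)$ suffices. A Taylor expansion of the log-MGF around zero yields $e^{-cNx^2}$ for $x\le 1$, while for $x>1$ the optimization forces $\lambda$ into the regime where the cumulant is of order $\lambda^{\beta/(\beta-1)}$ (or linear if $\beta=1$), producing $e^{-cNx^\beta}$. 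For part~\ref{prop:el} with $\beta\in(0,1)$ the MGF is no longer finite, so I set $M=Nx$; the truncation bound gives $N\P(\abs{Y_1}>Nx)\le Ne^{-a(Nx)^\beta}$, and Bernstein applied to the bounded part of variance $\le\mathrm{Var}(Y_1)<\infty$ and range $M$ gives $\exp(-cNx^2/(\mathrm{Var}(Y_1)+Mx/N))=\exp(-cNx^2/(\mathrm{Var}(Y_1)+x^2))$, which is $\le e^{-cNx^2}$ up to constants.

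Part~\ref{prop:mg} with $t>2$ uses the same truncation $M=Nx/2$: Markov gives the tail bound $N\P(\abs{Y_1}>Nx/2)\le CN(Nx)^{-t}\E\abs{Y_1}^t$, and Bernstein on the truncated sum (finite variance, range $M$) yields $e^{-cNx^2}$ in the same way. Part~\ref{prop:ml} with $t\in[1,2]$ is simpler and does not need truncation: the von Bahr--Esseen (for $t\in[1,2]$) or Marcinkiewicz--Zygmund inequality gives $\E\abs{\overline Y_N-\E Y_1}^t\le CN^{1-t}\E\abs{Y_1}^t$, and Markov's inequality then directly yields $CN(Nx)^{-t}$ with no exponential term.

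The main technical obstacle is bookkeeping in part~\ref{prop:el}: the $N$ prefactor coming from the union bound must be absorbed into $Ce^{-c(Nx)^\beta}$ with constants depending only on $a,\beta,\E Y_1,\E[e^{a\abs{Y_1}^\beta}]$. This is handled by splitting on whether $(Nx)^\beta\ge K\log N$ for a constant $K$ depending only on $a,\beta$ (in which case $Ne^{-a(Nx)^\beta}\le e^{-(a/2)(Nx)^\beta}$) or not (in which case $e^{-c(Nx)^\beta}$ is bounded below by a fixed negative power of $N$, making the inequality trivially valid after enlarging $C$). A minor but similar care is needed in part~\ref{prop:eg} to stitch the two regimes $x\le 1$ and $x>1$ together via an $\ind{\cdot}$; and in parts~\ref{prop:mg} and~\ref{prop:el} to verify that $\abs{\E Y_1-\E Y_1^M}\le x/2$ holds for the chosen $M$, which follows from integrability of $Y_1$.
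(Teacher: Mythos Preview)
Your sketch for parts~\ref{prop:eg} and~\ref{prop:ml} is sound. The Chernoff approach in~\ref{prop:eg} is a legitimate alternative to the transportation-inequality route the paper cites, and the von Bahr--Esseen argument in~\ref{prop:ml} is exactly what the paper invokes.

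Parts~\ref{prop:el} and~\ref{prop:mg}, however, contain a genuine gap: your Bernstein step is mis-stated and, once corrected, does not give what you claim. Bernstein's inequality for bounded centred variables $|Z_i|\le M$ with variance $\sigma^2$ reads
\[
\P\bigl(|\overline Z_N-\E Z_1|>x\bigr)\le 2\exp\!\left(-\frac{Nx^2}{2(\sigma^2+Mx/3)}\right),
\]
with $Mx$ in the denominator, not $Mx/N$. With your truncation level $M=Nx$ (or $Nx/2$) the denominator is $\sigma^2+cNx^2$, so for $Nx^2$ large the bound degenerates to a fixed constant $\exp(-c')$ rather than $e^{-cNx^2}$. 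Truncation at $M\sim Nx$ combined with Bernstein (or Bennett) simply cannot produce the Gaussian term together with the tail term $N\P(|Y_1|>cNx)$; what you need is the Fuk--Nagaev inequality, which is precisely what the paper cites (Fuk--Nagaev for~\ref{prop:mg}, and the Borovkov/Merlev\`ede variant for~\ref{prop:el}). Its proof involves a sharper Chernoff argument in which the truncated MGF is bounded by $\exp(\lambda^2\sigma^2 e^{\lambda M}/2)$ and $\lambda,M$ are optimised jointly, not the naive two-step ``truncate then apply Bernstein''.

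A smaller issue: in your absorption of the factor $N$ in~\ref{prop:el}, the case $(Nx)^\beta<K\log N$ does not become trivial by enlarging $C$, since the lower bound $e^{-c(Nx)^\beta}\ge N^{-cK}$ would force $C$ to depend on $N$. The clean split is the one the paper uses: if $Nx^2<A$ then $Ce^{-cNx^2}\ge 1$ for $C\ge e^{cA}$; if $Nx^2\ge A$ then $(Nx)^2\ge NA$, so $(Nx)^\beta\ge A^{\beta/2}N^{\beta/2}$, and since $\log N\le D_\beta N^{\beta/2}$ for a constant $D_\beta$, choosing $A$ large forces $\log N\le (c/2)(Nx)^\beta$, whence $Ne^{-c(Nx)^\beta}\le e^{-(c/2)(Nx)^\beta}$.
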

\begin{proof}
The first estimate in Proposition~\ref{prop:dvp} can be deduced from the transportation inequality, see \cite{djellout2004transportation}, \cite{gozlan2006integral} and \cite{alma991006972269704436} for more details. The estimate~$\ref{prop:el}$ follows from \cite[Formula (1.4)]{merlevede2011bernstein}, which is based on \cite[Corollary 5.1]{borovkov2000estimates}. The argument is as follows: \cite[Formula (1.4)]{merlevede2011bernstein} gives the upper bound $e^{-cNx^2}+Ne^{-c(Nx)^{\beta}}$. When $Nx^2<A$ and $A>0$ is large enough, the estimate in~$\ref{prop:el}$ becomes greater than $1$. If $Nx^2\ge A$, it is easy to check that $e^{-cNx^2}+Ne^{-c(Nx)^{\beta}}$ is dominated by $C(e^{-cNx^2}+e^{-c(Nx)^{\beta}})$ possibly with a different constant $c$. For~$\ref{prop:mg},$ see \cite[Corollary 4]{fuk1971probability}. The last result is stated in \cite[Section 5]{fuk1971probability} and \cite{vonBahrBengt1965Iftr}.
\end{proof}

\subsection{Estimates for compactly supported measures}
Let us begin our discussion by considering a probability measure $\mu$ supported on $\overline{B_R(0)}$, the closed ball with radius $R>0$ centered at $0$. Since $G$ is non-decreasing and thus $\mu$-integrable, $\gamma$ can be chosen to be any positive number. Let us choose $\gamma>2$. Next we choose $S$ sufficiently large, so that $\mathcal{K}_g$ and $\mathcal{K}_G$ are both finite for $\eta=\frac{1}{2} \wedge \frac{p}{d}$ and $c_0=1$. Hoeffding's lemma (see \cite[Theorem 2.1]{devroye2001combinatorial}) gives
\begin{align}
    \P\left(M_1(\mu_N; G)-M_1(\mu; G)>x\right)
    \le e^{-cNx^2}\ind{\{x\le A_0\}}\,
\end{align}
for some positive constants $c$ and $A_0$ that depend on $G$ and $R$ only. Combining this result with Theorem~\ref{thm:goal}, we obtain the following concentration inequality.
\begin{ex}[$\d_f$ assuming compact support]\label{ex:cpt} Suppose $\mu$ is supported on $\overline{B_{R}(0)}$. Then for all $N\in \N$ and $x>0$,
\begin{align}
    \P(\d_f(\mu, \mu_N)>x)
    \le Ce^{-cN\varphi(x)}\ind{\{x\le A_0\}}\,
\end{align}
for some positive constants $c$, $C$ and $A_0$ that depend only on $d, p, g, G, R$. Taking $\d_f = \t_p$ we recover the Fournier--Guillin bound recalled in Lemma~\ref{lem:cmpbd} below, which is used in the proof of Theorem~\ref{thm:goal}. We are thus reassured that the proof of the theorem does not lead to any loss in sharpness in the case of compactly supported distributions.
\end{ex}

\subsection{Estimates for \texorpdfstring{$\t_p$}{TEXT}}
Next we consider $f(r)=r^p$ for $p>0$. As discussed earlier, we take $g=f$ and $G(r)=c_pr^p$, where $c_p=2^{p-1}$ if $p\ge 1$ and $c_p=1$ if $p\in (0,1)$. Let us assume that $M_q(\mu)<\infty$ for some $q>p$, and set $\gamma=q/p$, $S(r)=1\vee r^q$. As computed in \eqref{eq:kforw}, this forces $\eta\in (0,1-p/q)\cap(0, \frac{1}{2} \wedge \frac{p}{d}]$. Note that $c_0$ can be chosen to be any positive number less than $q(1-\eta)-p$. We now choose $\eta$ as large as possible in order to get the best possible estimate from Theorem~\ref{thm:goal}. Comparing $1-p/q$ with $\frac{1}{2} \wedge \frac{p}{d}$, we obtain the following:
\begin{enumerate}
    \item If $p\ge d/2$ and $q>2p$, then $\gamma>2$ and $\eta=1/2$.
    \item If $p\ge d/2$ and $q\in (p,2p]$, then $\gamma\le 2$ and $\eta=1-p/q-\ep$ for arbitrarily small $\ep\in (0,1-p/q)$.
    \item If $p\in (0,d/2)$ and $q>2p$, then $\gamma>2$ and $\eta=p/d$.
    \item If $p\in (0,d/2)$ and $q\in(dp/(d-p),2p]$, then $\gamma\le 2$ and $\eta=p/d$.
    \item If $p\in (0,d/2)$ and $q\in (p, dp/(d-p)]$, then $\gamma\le 2$ and $\eta=1-p/q-\ep$ for arbitrarily small $\ep\in (0,1-p/q)$.
\end{enumerate}
Combining these results with Theorem~\ref{thm:goal} and Proposition~\ref{prop:dvp}$\ref{prop:mg}$--$\ref{prop:ml}$, we establish the following non-asymptotic concentration inequalities for $\t_p$:

\begin{ex}[$\t_p$ assuming finite $q$th moment]\label{ex:wsm} Let us assume $M_q(\mu)<\infty$ for some $q>p$.
\begin{anumerate}
    \item\label{ex:wsm hq} If $q>2p$, then for all $N\in \N$ and $x>0$,
    \begin{align}
        \P(\t_p(\mu, \mu_N)>x)
        \le C\bigg(e^{-cN\varphi(x)}\ind{\{x\le 1\}}
        +N(Nx)^{-q/p}\bigg).
    \end{align}
    \item If $p> d/2$ and $q\in (p,2p]$, then for all $N\in \N$, $\ep\in (0,1-p/q)$ and $x>0$,
    \begin{align}
        \P(\t_p(\mu, \mu_N)>x)
        \le C\bigg(e^{-cN^{2(1-p/q-\ep)}x^2}\ind{\{x\le 1\}}
        +N(Nx)^{-q/p}\bigg).
    \end{align}
    \item If $p=d/2$ and $q\in (p,2p]$, then for all $N\in \N$, $\ep\in (0,1-p/q)$ and $x>0$,
    \begin{align}
        &\P(\t_p(\mu, \mu_N)>x)\\
        &\le C\bigg(e^{-cNx^{1/(1-p/q-\ep)}/(\log(2+1/x))^2}\ind{\{x\le 1\}}
        +e^{-cN^{2(1-p/q-\ep)}x^2}\ind{\{x\le 1\}}
        +N(Nx)^{-q/p}\bigg).
    \end{align}
    \item If $p\in (0,d/2)$ and $q\in (dp/(d-p), 2p]$, then for all $N\in \N$, $\ep\in (0,1-p/q)$ and $x>0$,
    \begin{align}
        \P(\t_p(\mu, \mu_N)>x)
        \le C\bigg(e^{-cNx^{d/p}}\ind{\{x\le 1\}}
        +e^{-cN^{2(1-p/q-\ep)}x^2}\ind{\{x\le 1\}}
        +N(Nx)^{-q/p}\bigg).
    \end{align}
    \item If $p\in (0,d/2)$ and $q\in (p, dp/(d-p)]$, then for all $N\in \N$, $\ep\in (0,1-p/q)$ and $x>0$,
    \begin{align}
        \P(\t_p(\mu, \mu_N)>x)
        \le C\bigg(e^{-cN^{2(1-p/q-\ep)}x^2}\ind{\{x\le 1\}}
        +N(Nx)^{-q/p}\bigg).
    \end{align}
\end{anumerate}
Here, the constants $c>0$ and $C>0$ depend only on $d,p,q, M_q(\mu)$ and if applicable, also on $\ep$.
\end{ex}

Assuming that the law $\mu$ has an exponential moment, these estimates can be improved by use of Proposition~\ref{prop:dvp}$\ref{prop:eg}$--$\ref{prop:el}$.

\begin{ex}[$\t_p$ assuming finite exponential moment]\label{ex:wsem} Let us assume $E:=\int_{\R^d}e^{a\abs{y}^{\beta}}\,d\mu(y)<\infty$ for some $a, \beta>0$.
\begin{anumerate}
    \item\label{ex:wsem hb} If $\beta\ge p$, then for all $N\in \N$ and $x>0$,
    \begin{align}
        \P(\t_p(\mu, \mu_N)>x)
        \le C\left(e^{-cN\varphi(x)}\ind{\{x\le 1\}}
        +e^{-cNx^{\beta/p}}\ind{\{x\ge 1\}}\right).
    \end{align}
    \item\label{ex:wsem lb} If $\beta\in (0,p)$, then for all $N\in \N$ and $x>0$,
    \begin{align}
        \P(\t_p(\mu, \mu_N)>x)
        \le C\left(e^{-cN\varphi(x)}\ind{\{x\le 1\}}
        +e^{-c(Nx)^{\beta/p}}\right).
    \end{align}
\end{anumerate}
Here, the constants $c>0$ and $C>0$ depend only on $d,p,a, \beta, E$.
\end{ex}
\subsection{Comparison}
Let us compare our estimates with \cite[Theorem 2, Remark 3]{fournier2015rate}, which states the following:
\begin{enumerate}
    \item If $\mu$ is supported on $(-1,1]^d$, then for all $N\in \N$ and $x>0$,
    \begin{align}
        \P(\t_p(\mu, \mu_N)>x)
        \le Ce^{-cN\varphi(x)}.
    \end{align}
    \item If $M_{q}(\mu)<\infty$ for some $q>2p$, then for all $N\in \N$, $\ep\in (0, q)$ and $x>0$,
    \begin{align}
        \P(\t_p(\mu, \mu_N)>x)
        \le C\bigg(e^{-cN\varphi(x)}\ind{\{x\le 1\}}
        +N(Nx)^{-(q-\ep)/p}\bigg).
    \end{align}
    \item If $\int_{\R^d} e^{a\abs{y}^{\beta}}\,d\mu(y)<\infty$ for some $\beta>p$ and $a>0$, then for all $N\in \N$ and $x>0$,
    \begin{align}
        \P(\t_p(\mu, \mu_N)>x)
        \le C\bigg(e^{-cN\varphi(x)}\ind{\{x\le 1\}}
        +e^{-cN x^{\beta/p}}\ind{\{x> 1\}}\bigg).
    \end{align}
    \item If $\int_{\R^d} e^{a\abs{y}^{\beta}}\,d\mu(y)<\infty$ for some $\beta\in(0,p)$ and $a>0$, then for all $N\in \N$ and $x>0$,
    \begin{align}
        \P(\t_p(\mu, \mu_N)>x)
        \le C\bigg(e^{-cN\varphi(x)}\ind{\{x\le 1\}}
        +e^{-cNx^2(\log(N+1))^{1-2p/\beta}}
        +e^{-c(Nx)^{\beta/p}}\bigg).
    \end{align}
    \item If $\int_{\R^d} e^{a\abs{y}^{\beta}}\,d\mu(y)<\infty$ for some $\beta\in(0,p)$ and $a>0$, then for all $N\in \N$, $\ep\in (0,\beta)$ and $x>0$,
    \begin{align}
        \P(\t_p(\mu, \mu_N)>x)
        \le C\bigg(e^{-cN\varphi(x)}\ind{\{x\le 1\}}
        +e^{-c(Nx)^{(\beta-\ep)/p}}\ind{\{x\le 1\}}
        +e^{-c(Nx)^{\beta/p}}\ind{\{x> 1\}}\bigg).
    \end{align}
\end{enumerate}
Example~\ref{ex:cpt} recovers the rates for compactly supported $\mu$. For all other cases, Theorem~\ref{thm:goal} applied to $\t_p$ improves on existing results in \cite{fournier2015rate}. Indeed, Example~\ref{ex:wsm}$\ref{ex:wsm hq}$ shows that one can take $\varepsilon=0$ in $(2)$. Furthermore, contrary to $(3)$, Example~\ref{ex:wsem}$\ref{ex:wsem hb}$ covers the case $\beta=p$. Finally, Example~\ref{ex:wsem}$\ref{ex:wsem lb}$ shows that the logarithmic term and the $\ep$-term can be removed in $(4)$ and $(5)$. Our paper additionally covers the case $M_q(\mu)<\infty$ with $q\le 2p$, which was not covered in \cite{fournier2015rate} at all. As a further indication of the sharpness of the estimates in Example~\ref{ex:wsm}, let us derive moment bounds for $\t_p$. Using the identity $\E[\t_p(\mu, \mu_N)]=\int_0^{\infty}\P(\t_p(\mu, \mu_N)>x)\,dx$, we obtain
\begin{align}
    \E[\t_p(\mu, \mu_N)]
    \le C
    \begin{dcases}
     N^{-1/2}+N^{-(q-p-\ep)/q} & \text{ if } p>d/2\,\\
    \log(N+1)N^{-1/2}+N^{-(q-p-\ep)/q} & \text{ if } p=d/2\,\\
    N^{-p/d}+N^{-(q-p-\ep)/q} & \text{ if } p\in (0,d/2)\,
    \end{dcases}
\end{align}
for arbitrarily small $\ep\in (0, q-p)$. Compared with \cite[Theorem 1]{fournier2015rate} these bounds only introduce an additional $\ep$-loss in the decay rate. Since the bounds in \cite[Theorem 1]{fournier2015rate} are known to be close to optimal, our rates must be close to optimal too.

\subsection{Estimates for \texorpdfstring{$\mathcal{E}_{p,a}$}{TEXT}}
Now let us consider $f(r)=e^{ar^p}-1$ for $a,p>0$. As mentioned in Section~\ref{sec:main results}, we take $g=f$ and $G(r)=\frac{1}{2}(e^{2c_p a r^p}-1)$ where $c_p=2^{p-1}$ if $p\ge 1$ and $c_p=1$ if $p\in (0,1)$. Let us assume that $\int_{\R^d} e^{b\abs{y}^p}\,d\mu(y)<\infty$ for some $b>2^{p+1}c_p a$. This allows us to take $\gamma=b/(2c_p a)$ and $S(r)=e^{b r^p}$. In order to have $\mathcal{K}_g<\infty$ and $\mathcal{K}_G<\infty$, we select $\eta\in (0, 1-2^{p+1}c_pa/b)\cap (0,\frac{1}{2} \wedge \frac{p}{d}]$. Lastly we set $c_0=1$. As before we now determine the maximal value $\eta$ satisfying these constraints. For $p\ge 1$ we obtain the following:
\begin{enumerate}
    \item If $p\ge d/2$ and $b>2^{2p+1}a$, then $\gamma>2$ and $\eta=1/2$.
    \item If $p\ge d/2$ and $b\in (4^{p} a, 2^{2p+1}a]$, then $\gamma>2$ and $\eta=1-4^{p}a/b-\ep$ for arbitrarily small $\ep\in (0, 1-4^{p}a/b)$.
    \item If $p\in (0,d/2)$ and $b> 4^{p}da/(d-p)$, then $\gamma>2$ and $\eta=p/d$.
    \item If $p\in (0,d/2)$ and $b\in (4^{p}a, 4^{p}da/(d-p)]$, then $\gamma>2$ and $\eta=1-4^{p}a/b-\ep$ for arbitrarily small $\ep\in (0,1-4^{p}a/b)$.
\end{enumerate}
The computations for $p\in (0,1)$ are essentially the same as for $p\ge 1$, so we skip the details. Using Proposition~\ref{prop:dvp}$\ref{prop:mg}$--$\ref{prop:ml}$, we then compute the estimates in Theorem~\ref{thm:goal}. We summarize the results below.

\begin{ex}[$\mathcal{E}_{p, a}$ when $p\ge 1$]\label{ex:expdhp} Let $p\ge 1$ and $E:=\int_{\R^d} e^{b\abs{y}^p}\,d\mu(y)<\infty$ for some $b>4^{p}a$.
\begin{anumerate}
    \item\label{ex:expdhp hb} Suppose that $b>2^{2p+1}a$ if $p\ge d/2$ and $b>4^{p}da/(d-p)$ if $p\in (0,d/2)$. Then for all $N\in \N$ and $x>0$,
    \begin{align}
        \P(\mathcal{E}_{p, a}(\mu, \mu_N)>x)
        \le C\bigg(e^{-cN\varphi(x)}\ind{\{x\le 1\}}
        +N(Nx)^{-b/(2^p a)}\bigg).
    \end{align}
    \item\label{ex:expdhp lb} Suppose that $b\in (4^{p}a, 2^{2p+1}a]$ if $p\ge d/2$ and $b\in (4^{p}a, 4^{p}da/(d-p)]$ if $p\in (0,d/2)$. Then for all $N\in \N$, $\ep\in (0,1-4^{p}a/b)$ and $x>0$,
    \begin{align}
        \P(\mathcal{E}_{p, a}(\mu, \mu_N)>x)
        \le C\bigg(e^{-cN\varphi_{1-4^{p}a/b-\ep}(x)}\ind{\{x\le 1\}}
        +N(Nx)^{-b/(2^p a)}\bigg).
    \end{align}
\end{anumerate}
Here, the constants $c>0$ and $C>0$ depend only on $d,p,a,b, E$ and if applicable, also on $\ep$.
\end{ex}

\begin{ex}[$\mathcal{E}_{p,a}$ when $p\in (0,1)$]\label{ex:expdlp} Let $p\in (0,1)$ and $E:=\int_{\R^d}e^{b\abs{y}^p}\,d\mu(y)<\infty$ for some $b>2^{p+1}a$.
\begin{anumerate}
    \item Suppose that
    \begin{align}
        b
        >
        \begin{dcases}
        2^{p+2}a & \text{ if } p\ge d/2\,\\
        4a & \text{ if } p\in (0,d/2),\, 2^pd/(d-p)\le 2\,\\
        2^{p+1}da/(d-p) & \text{ if } p\in (0,d/2),\, 2^{p}d/(d-p)>2.
    \end{dcases}
    \end{align}
    Then for all $N\in \N$ and $x>0$,
    \begin{align}
        \P(\mathcal{E}_{p,a}(\mu, \mu_N)>x)
        \le C\bigg(e^{-cN\varphi(x)}\ind{\{x\le 1\}}
        +N(Nx)^{-b/(2a)}\bigg).
    \end{align}
    \item Suppose that
    \begin{align}
        \begin{dcases}
        4a< b\le 2^{p+2}a & \text{ if } p\ge d/2\,\\
        2^{p+1}da/(d-p)< b\le 4a & \text{ if } p\in (0,d/2),\, 2^{p}d/(d-p)\le 2\,\\
        4a< b\le 2^{p+1}da/(d-p) & \text{ if } p\in (0,d/2),\, 2^p d/(d-p)>2.
        \end{dcases}
    \end{align}
    Then for all $N\in \N$, $\ep\in (0, 1-2^{p+1}a/b)$ and $x>0$,
    \begin{align}
        \P(\mathcal{E}_{p,a}(\mu, \mu_N)>x)
        \le C\Big(a(N,x)\ind{\{x\le 1\}}
        +N(Nx)^{-b/(2a)}\Big),
    \end{align}
    where
    \begin{align}
        a(N, x)
        :=\begin{dcases}
        e^{-cN\varphi_{1-2^{p+1}a/b-\ep}(x)} & \text{ if } p\ge d/2\,\\
        e^{-cN\varphi(x)}
        +e^{-cN^{2(1-2a/b-\ep)}x^2}& \text{ if } p\in (0,d/2),\, 2^{p}d/(d-p)\le 2\,\\
        e^{-cN\varphi_{1-2^{p+1}a/b-\ep}(x)} &  \text{ if } p\in (0,d/2),\, 2^{p}d/(d-p)> 2.
        \end{dcases}
    \end{align}
    \item Suppose that
    \begin{align}
        2^{p+1}a
        <b
        \le
        \begin{dcases}
        4a & \text{ if } p\ge d/2\,\\
        2^{p+1}da/(d-p) & \text{ if } p\in (0,d/2),\, 2^{p}d/(d-p)\le 2\,\\
        4a & \text{ if } p\in (0,d/2),\, 2^p d/(d-p)>2.
        \end{dcases}
    \end{align}
    Then for all $N\in \N$, $\ep\in (0, 1-2^{p+1}a/b)$ and $x>0$,
    \begin{align}
        &\P(\mathcal{E}_{p,a}(\mu, \mu_N)>x)\\
        &\le C\bigg(e^{-cN\varphi_{1-2^{p+1}a/b-\ep}(x)}\ind{\{x\le 1\}}
        +e^{-cN^{2(1-2a/b-\ep)}x^2}\ind{\{x\le 1\}}
        +N(Nx)^{-b/(2a)}\bigg).
    \end{align}
\end{anumerate}
Here, the constants $c>0$ and $C>0$ depend only on $d,p,a,b,E$ and if applicable, also on $\ep$.
\end{ex}

As for $\t_p$ we can compute moment bounds of $\mathcal{E}_{p, a}$ from Examples~\ref{ex:expdhp} and~\ref{ex:expdlp}. Recalling $\E[\mathcal{E}_{p, a}(\mu, \mu_N)]=\int_0^{\infty}\P(\mathcal{E}_{p, a}(\mu, \mu_N)>x)\,dx$ we obtain the following bounds.
\begin{ex}[Moment bounds of $\mathcal{E}_{p, a}$ when $p\ge 1$]\label{ex:mom expdhp} Let $p\ge 1$ and $E:=\int_{\R^d}e^{b\abs{y}^p}\,d\mu(y)<\infty$ for some $b>4^{p}a$.
\begin{anumerate}
    \item\label{ex:mom expdhp hb} Under the same assumptions as in Example~\ref{ex:expdhp}\ref{ex:expdhp hb} we have
    \begin{align}
        \E[\mathcal{E}_{p,a}(\mu, \mu_N)]
        \le C
        \begin{dcases}
            N^{-1/2} & \text{ if } p>d/2\,\\
            \log(N+1)N^{-1/2} & \text{ if } p=d/2\,\\
            N^{-p/d} & \text{ if } p\in (0,d/2).
        \end{dcases}
    \end{align}
    \item Under the same assumptions as in Example~\ref{ex:expdhp}\ref{ex:expdhp lb} and with the same $\ep$ we have
    \begin{align}
        \E[\mathcal{E}_{p,a}(\mu, \mu_N)]
        \le C N^{-(1-4^pa/b-\ep)}.
    \end{align}
\end{anumerate}
Here, the dependence of $C$ on parameters is the same as in Example~\ref{ex:expdhp}.
\end{ex}
\begin{ex}[Moment bounds for $\mathcal{E}_{p,a}$ when $p\in (0,1)$]\label{ex:mom expdlp} Let $p\in (0,1)$ and $E:=\int_{\R^d}e^{b\abs{y}^p}\,d\mu(y)<\infty$ for some $b>2^{p+1}a$.
\begin{anumerate}
    \item\label{ex:mom expdlp hb} Suppose that $b>2^{p+2}a$ if $p\ge d/2$ and $b>2^{p+1}da/(d-p)$ if $p\in (0,d/2)$. Then for all $N\in \N$,
    \begin{align}
        \E[\mathcal{E}_{p,a}(\mu, \mu_N)]
        \le C
        \begin{dcases}
        N^{-1/2} & \text{ if } p>d/2\,\\
        \log(N+1)N^{-1/2} & \text{ if } p=d/2\,\\
        N^{-p/d} & \text{ if } p\in (0,d/2).
    \end{dcases}
    \end{align}
    \item Suppose that $b\in (2^{p+1}a, 2^{p+2}a]$ if $p\ge d/2$ and $b\in (2^{p+1}a, 2^{p+1}da/(d-p)]$ if $p\in (0,d/2)$. Then for all $N\in \N$ and $\ep\in (0, 1-2^{p+1}a/b)$,
    \begin{align}
        \E[\mathcal{E}_{p,a}(\mu, \mu_N)]
        \le C N^{-(1-2^{p+1}a/b-\ep)}.
    \end{align}
\end{anumerate}
Here, the dependence of $C$ on the parameters is the same as in Example \ref{ex:expdlp}.
\end{ex}
The moment bounds in Examples~\ref{ex:mom expdhp}$\ref{ex:mom expdhp hb}$ and~\ref{ex:mom expdlp}$\ref{ex:mom expdlp hb}$ are sharp. This follows from the fact that $c_1\t_p\le \mathcal{E}_{p,a}\le c_2 \t_p$ for compactly supported measures together with \cite[Section 1]{fournier2015rate}, where the existence of compactly supported measures $\mu, \nu\in \sp(\R^d)$ such that $\E[\t_p(\mu, \mu_N)]\ge CN^{-1/2}$ and $\E[\t_p(\nu, \nu_N)]\ge CN^{-p/d}$ is stated. Furthermore, it follows from \cite{ajtai1984optimal} that the uniform distribution $\rho$ on $[-1,1]^d$ admits the lower bound $\E[\t_p(\rho, \rho_N)]\ge C (\log N/N)^{1/2}$ when $p=d/2=1$.

\section{Numerical Tests}\label{sec:numericaltest}

\begin{figure}[ht]
    \centering
    \includegraphics[width = 12.6cm, height = 8.4cm]{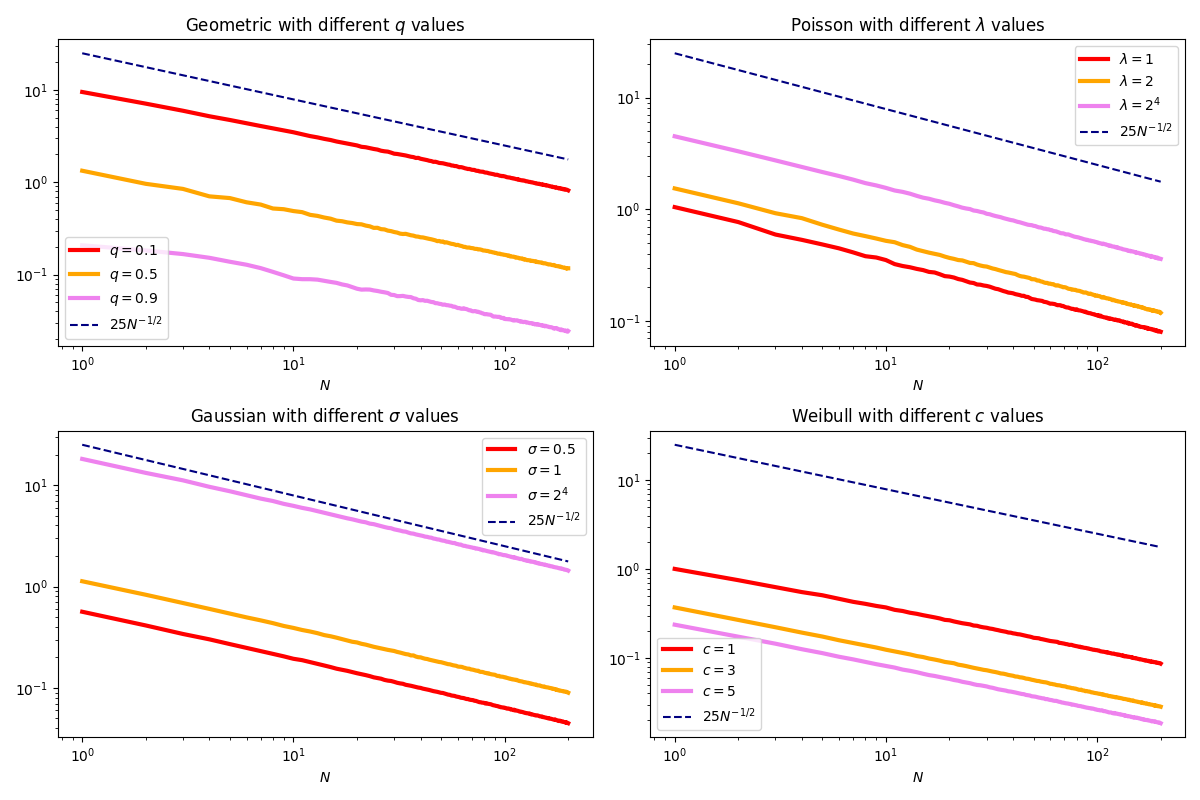}
    \caption{Log-log plots of functions $N\mapsto \E[\t_p(\mu, \mu_N)]$ for Geometric, Poisson, Gaussian and Weibull distributions with $p=1$. The dashed line is a theoretical upper bound $C/\sqrt{N}$ with $C=25$.} 
    \label{fig:wass_diff_distri}
\end{figure}

\begin{figure}[ht]
    \centering
    \includegraphics[width = 12.6cm, height = 8.4cm]{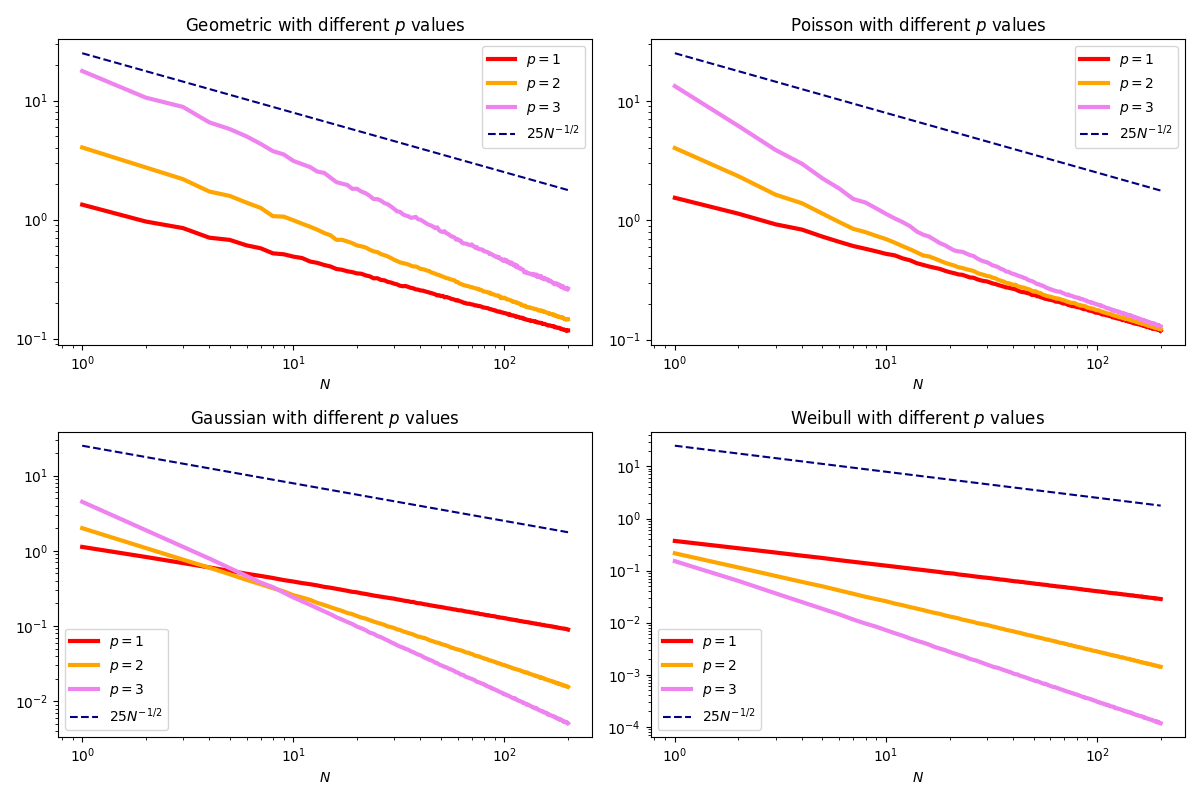}
    \caption{Log-log plots of functions $N\mapsto \E[\t_p(\mu, \mu_N)]$ for Geometric, Poission, Gaussian and Weibull distributions with $p=1,2,3$. The dashed line is a theoretical upper bound $C/\sqrt{N}$ with $C=25$.}
    \label{fig:wass_diff_p}
\end{figure}

In this section, we test our theoretical moment bounds numerically. We focus here on the one dimensional case, i.e. $d=1$, where it is known that the monotone coupling is optimal for radial and convex nonnegative cost functions; see \cite[Remark 2.19]{villani2021topics} for details. In particular, the monotone coupling is optimal both for $\t_p$ and $\mathcal{E}_{p,a}$ when $p\ge 1$. Using this fact we compute $\E[\t_p(\mu, \mu_N)]$ and $\E[\mathcal{E}_{p,a}(\mu, \mu_N)]$ as a function of $N$ by Monte Carlo simulation, averaging over $10^4$ trials. We take sample sizes $N$ in the range $[1,200]$.\footnote{See \url{https://github.com/Jonghwap/concentration2023/}.}

In Figure \ref{fig:wass_diff_distri} we compare $\E[\t_1(\mu, \mu_N)]$ with the theoretical upper bound $C/\sqrt{N}$. For ease of presentation, results are presented on a log-log scale. In the top left graph we take $\mu\sim \text{Geom}(q)$, i.e. $\mu(\{k\})=(1-q)^{k-1}q$, $k\ge 1$ and plot $\E[\t_1(\mu, \mu_N)]$ for $q=0,1, 0.5, 0.9$. The top right plot in Figure \ref{fig:wass_diff_distri} showcases results for $\mu\sim \text{Poiss}(\lambda)$ and $\lambda=1, 2, 2^4$. In the bottom of Figure \ref{fig:wass_diff_distri}, we present results for $\mu\sim N(0, \sigma^2)$, $\sigma=1/2, 1, 2^4$ on the left, and for $\mu$ equal to a Weibull distribution with parameter $c=1, 3, 5$ on the right, i.e. $\mu(dx)= c x^{c-1}e^{-x^{c}}\ind{\{x>0\}}dx$. The numerical results are consistent with the theoretical upper bound $C/\sqrt{N}$.
In fact the functions $N\mapsto \E[\t_1(\mu, \mu_N)]$ are almost parallel to the dashed line for large $N$. This is numerical evidence that the convergence rate $1/\sqrt{N}$ is almost sharp for these distributions.

In Figure \ref{fig:wass_diff_p} we now plot $N\mapsto \E[\t_p(\mu, \mu_N)]$ for different values of $p$. We consider the same distributions as before. In particular, the top left plot shows results for $\mu\sim \text{Geom}(1/2)$, while $\mu\sim \text{Poiss}(2)$ in the top right plot. The bottom left plot shows results for $\mu\sim N(0,1)$, while $\mu$ is equal to a Weibull distribution with the parameter $c=3$ in the bottom right plot. We take $p=1, 2, 3$. As in Figure \ref{fig:wass_diff_distri}, numerical results in Figure \ref{fig:wass_diff_p} are consistent with the upper bound $C/\sqrt{N}$. Also, the convergence rate $1/\sqrt{N}$ seems almost sharp for these distributions when $p=1$. However, when $p>1$, numerical results no longer suggest that the convergence rate $1/\sqrt{N}$ is sharp except for a Poisson distribution (top right). %
For all other distributions, the slope of a function $N\mapsto \E[\t_p(\mu, \mu_N)]$ is estimated to be strictly smaller than $-1/2$, implying that the inferred convergence rate is $1/N^{\alpha}$ for $\alpha>1/2$. In fact, $\alpha$ is estimated to increase as $p$ increases.

\begin{figure}[ht]
    \centering
    \includegraphics[width = 12.6cm, height = 8.4cm]{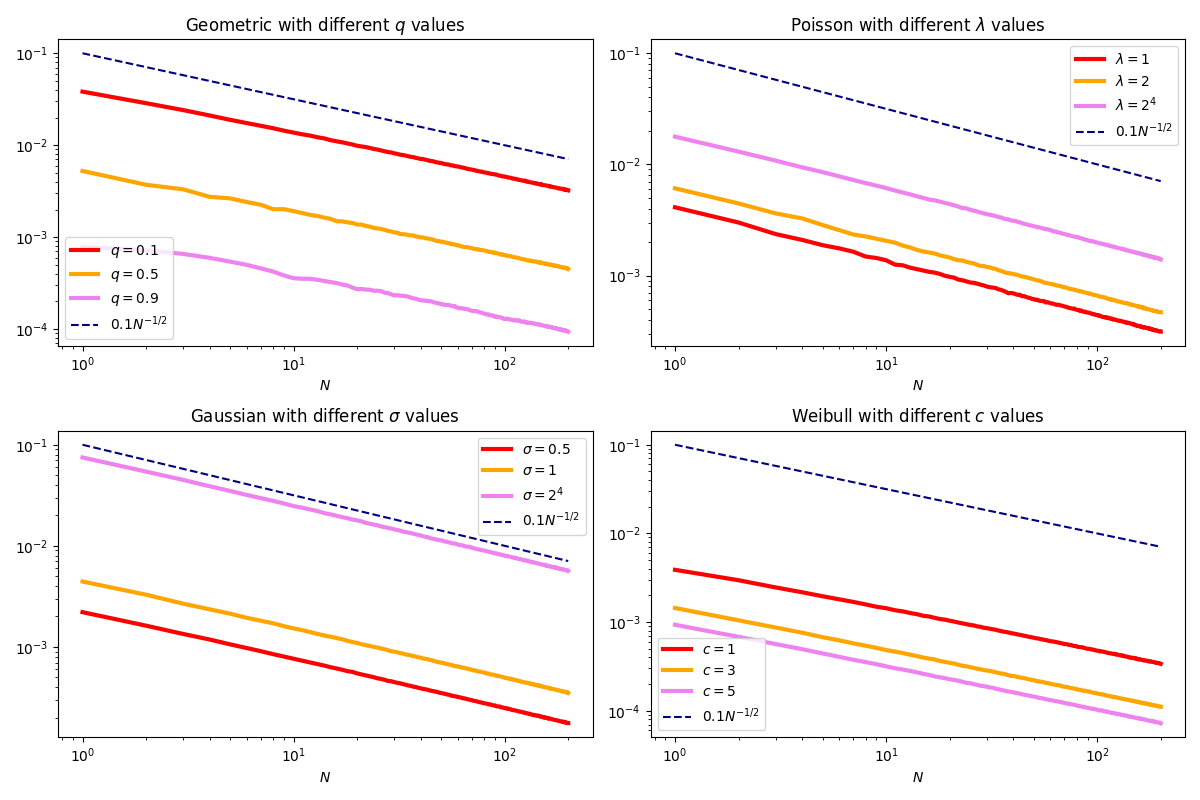}
    \caption{Log-log plots of functions $N\mapsto \E[\mathcal{E}_{p, a}(\mu, \mu_N)]$ for Geometric, Poisson, Gaussian and Weibull distributions with $p=1$ and $a=1/2^8$. The dashed line is a theoretical upper bound $C/\sqrt{N}$ with $C=0.1$.}
    \label{fig:diff_distri}
\end{figure}

\begin{figure}[ht]
    \centering
    \includegraphics[width = 12.6cm, height = 6.048cm]{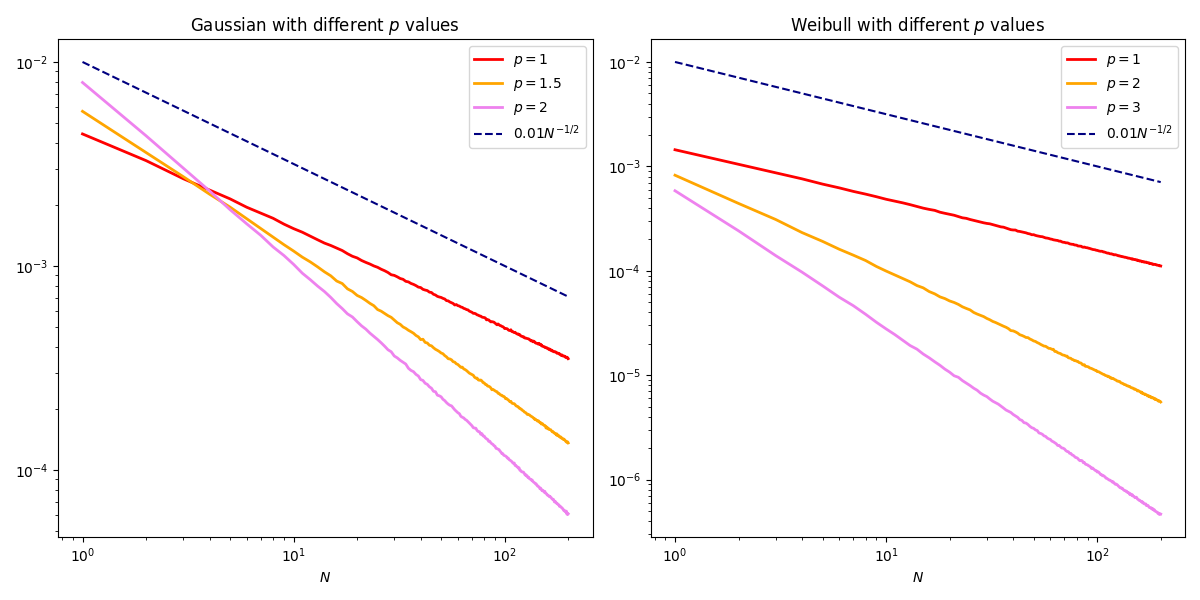}
    \caption{Log-log plots of functions $N\mapsto \E[\mathcal{E}_{p,a}(\mu, \mu_N)]$ for Gaussian and Weibull distributions with $p=1,2,3$ and $a=1/2^8$. The dashed line is a theoretical upper bound $C/\sqrt{N}$ with $C=0.01$.}
    \label{fig:diff_p}
\end{figure}

We present numerical results for $N\mapsto \E[\mathcal{E}_{p,a}(\mu, \mu_N)]$ in Figures~\ref{fig:diff_distri} and \ref{fig:diff_p}. In Figure~\ref{fig:diff_distri}, we compare $N \mapsto \E[\mathcal{E}_{1,a}(\mu, \mu_N)]$ with its theoretical upper bound $C/\sqrt{N}$. We test the same set of distributions as in Figure~\ref{fig:wass_diff_distri}. To satisfy the moment conditions in Example~\ref{ex:mom expdhp}, we take $a$ sufficiently small, say $a=1/2^8=1/256$. Numerical results in Figure~\ref{fig:diff_distri} are consistent with the theoretical upper bound $C/\sqrt{N}$ for $C\ge 0.1$. As before, examining the slope of the functions we conclude that $C/\sqrt{N}$ is a sharp upper bound of $N\mapsto \E[\mathcal{E}_{p,a}(\mu, \mu_N)]$  for these distributions.

Lastly we carry out an analysis similar to Figure \ref{fig:wass_diff_p} for $N\mapsto \E[\mathcal{E}_{p,a}(\mu, \mu_N)]$. For this we take $\mu\sim N(0,1)$ in the left plot and consider a Weibull distribution with the parameter $c=3$ in the right plot. Again we take $p=1,2,3$ with $a=1/2^8$. It is easy to check that $a=1/2^8$ satisfies the moment conditions. On the other hand, neither the geometric nor the Poisson distributions have exponential moments for $p>1$. Figure \ref{fig:diff_p} shows that the computed values are consistent with the upper bound $C/\sqrt{N}$ for $C\ge 0.01$. As observed in Figure \ref{fig:wass_diff_p}, the function $N\mapsto \E[\mathcal{E}_{p,a}(\mu, \mu_N)]$ seems to converge faster than $1/\sqrt{N}$ for $p>1$.

\section{Proof of Theorem \ref{thm:goal}}\label{sec:proof}

This section is devoted to the proof of Theorem~\ref{thm:goal}. A key step in the proof is to partition the support of $\mu$ into a countable number of annuli. The notation required to formalize this construction is introduced in Subsection~\ref{sec_partitions}. In Subsection~\ref{sec_sketch} we provide a high-level sketch of the proof, while the formal argument is given in Subsection~\ref{sec_formal_proof}.

\subsection{Partitions} \label{sec_partitions}

Let $B_r(x)$ and $\overline{B_r(x)}$ denote the open respectively closed ball of radius $r>0$ centered at $x\in \R^d$. Our proof will rely on a partition $(A^k)_{k\in \N}$ of $\R^d$ which is defined via $A^0:=\overline{B_1(0)}$ and $A^k:=\overline{B_{2^k}(0)}\setminus \overline{B_{2^{k-1}}(0)}$ for $k\ge 1$. For $\mu\in \sp(\R^d)$ and its empirical measure $\mu_N$, let us define their conditional measures on $A^k$ via
\begin{align}
    \mu^{k}
    :=
    \begin{dcases} \mu(\,\cdot\,\st A^k) & \text{ if } \mu(A^k)>0\,\\
    \delta_0 & \text{ if } \mu(A^k)=0,
    \end{dcases}\quad
    \mu_N^k
    :=
    \begin{dcases} \mu_N(\,\cdot\,\st A^k) & \text{ if } \mu_N(A^k)>0\,\\
    \delta_0 & \text{ if } \mu_N(A^k)=0.
    \end{dcases}
\end{align}
We note that if $\mu(A^k)>0$, $\mu^k$ is compactly supported on $A^k$. Similarly, on the event $\{\mu_N(A^k)>0\}$, $\mu^k_N$ is supported on $A^k$. Next we define the dilation $T^k\colon\R^d\to \R^d$ via $T^k(x):=x/2^k$. We denote the pushforward measure of $\mu^k$ through $T^k$ by  $T^k_{\#}\mu^k$ and remark that it is supported on $A^0$.

For i.i.d.~samples $Y_1, Y_2,\dots$ of $\mu^k$ we denote the empirical measure of $\mu^k$ with a sample size $N\in \N$ by $(\mu^k)_N := (1/N)\sum_{j=1}^{N}\delta_{Y_j}$ if $N\ge 1$ and $(\mu^k)_N:=\delta_0$ if $N=0$ as usual. Note that $(\mu^k)_N$ differs from the random measure $\mu^k_N$ defined above; the former is based on $N$ point masses in $A^k$, whereas the latter is based on a random number $N \mu_N(A^k)$ of point masses in $A^k$. 

Let $\g_N$ be the $\sigma$-algebra generated by $\{\mu_N(A^k)\}_{k\in \N}$. Since $\mu_N(A^k)$ is a discrete random variable, $\g_N$ can be characterized explicitly as follows. Defining
\begin{align}
    \mathbf{L}_N:=\left\{(L_{k})_{k\in \N} \st L_{k}\in \N,\,\, \sum_{k=0}^{\infty}L_{k}=N\right\} \text{ for all } N\in \N\,
\end{align}
the events $\{N\mu_N(A^k)=L_{k} \text{ for all }k\in \N\}$ with $(L_k)_{k \in \N} \in \mathbf{L}_N$ constitute the atoms of $\g_N$ (with a slight abuse of terminology as some of these ``atoms'' could be empty.) Hence for every $\P$-integrable random variable $Z$ we have
\begin{align}
    \E[Z \st \g_N]
    =\sum_{(L_{k})_{k\in \N}\in \mathbf{L}_N}
    \E^{(L_{k})}[Z]\ind{\{N\mu_N(A^k)=L_{k} \text{ for all } k\in \N\}},
\end{align}
where $\E^{(L_k)}$ is the expectation under the probability $\P^{(L_k)}$ defined via
\begin{align}
    \P^{(L_{k})}
    =\begin{dcases}
    \P(\,\cdot \, \st N\mu_N(A^k)=L_{k} \text{ for all } k\in \N) & \text{ if } \P(N\mu_N(A^k)=L_{k} \text{ for all } k\in \N)>0\,\\
    \P & \text{ otherwise.}
    \end{dcases}
\end{align}
Throughout the proof, all generic constants, which depend only on the external parameters $d$, $p$, $\gamma$, $\eta$, $\delta_0$, $\mathcal{K}_g$, $\mathcal{K}_G$, are allowed to change from line to line.

\subsection{Sketch of the proof of Theorem \ref{thm:goal}} \label{sec_sketch}

Before we proceed to give a rigorous proof of Theorem~\ref{thm:goal}, let us sketch some of its main arguments. For simplicity, we consider the case where $\d_f=\t_p$ and assume that $p>d/2$ and $M_q(\mu)<\infty$ for some $q>2p$. By choosing a suitable coupling between $\mu$ and $\mu_N$, we estimate
\begin{equation} \label{eq:sketch}
\begin{aligned}
    \t_p(\mu, \mu_N)
    &\le \sum_{k=0}^{\infty}(\mu(A^k)\wedge \mu_N(A^k))\t_p(\mu^k, \mu^k_N)
    +\sum_{k=0}^{\infty}(\mu(A^k)\wedge\mu_N(A^k))(M_p(\mu^k)-M_p(\mu^k_N))\\
    &+2\sum_{k=0}^{\infty}(\mu(A^k)-\mu_N(A^k))_{+}M_p(\mu^k)
    +M_p(\mu_N)-M_p(\mu).
\end{aligned}
\end{equation}
A similar estimate is used in \cite[Lemma 5]{fournier2015rate}; we refer to the proof of Theorem \ref{thm:goal} for details. The proof proceeds by estimating each of the above summands in \eqref{eq:sketch} separately. 

The first two series can be estimated using rates for compactly supported measures. Indeed, $\mu^k$ is compactly supported on either $A^k$ or $A^0$. Moreover we show in Lemma~\ref{lem:cond} that conditionally on $\g_N$, $\mu^k_N$ has the same distribution as $(\mu^k)_{N\mu_N(A^k)}$, i.e.\ an empirical measure of $\mu^k$ with a sample size $N\mu_N(A^k)$ which is random but constant on each atom of $\g_N$. A standard concentration inequality for the transport cost between $\mu$ and $\mu_N$ for compactly supported measures (see Lemma~\ref{lem:cmpbd}) combined with a change of variables then shows that
\begin{align}
    \P(2^{-kp}\t_p(\mu^k, \mu^k_N)>y\st \g_N)
    =\P(\t_p(T^{k}_{\#}\mu^k, T^{k}_{\#}(\mu^k)_{N\mu_N(A^k)})>y\st \g_N)
    \le Ce^{-cN\mu_N(A^k)y^2}.
\end{align}
Setting $x=(\mu(A^k)\wedge \mu_N(A^k))^{1/2}y$ we obtain
\begin{align}
    \P(2^{-kp}(\mu(A^k)\wedge \mu_N(A^k))^{1/2}\t_p(\mu^k, \mu^k_N)>x)\le Ce^{-cN x^2}.\label{eq:skt1}
\end{align}
With more effort, one proves that the estimate \eqref{eq:skt1} remains true after taking a supremum over $k\in \N$ inside the probability in \eqref{eq:skt1}. Hence (see Lemma~\ref{lem:cpt union bd} for more details),
\begin{align}
    \P\left(\sum_{k=0}^{\infty}(\mu(A^k)\wedge \mu_N(A^k))\t_p(\mu^k, \mu^k_N)>x\sum_{k=0}^{\infty}2^{kp}(\mu(A^k))^{1/2}\right)
    \le Ce^{-cNx^2}.\label{eq:skt2}
\end{align}
Here, the constant $\sum_{k=0}^{\infty}2^{kp} (\mu(A^k))^{1/2}$ can be controlled by $M_q(\mu)$ (more precisely it is bounded above by $C(1\vee M_q(\mu))^{1/2}$). The second series in \eqref{eq:sketch} can be controlled in a similar way; we refer to Lemma~\ref{lem:cpt union bd2} for details.

In order to bound the third series in \eqref{eq:sketch} we use techniques from empirical process theory. For this we first observe that
\begin{align}
    \P\left(\sum_{k=0}^{\infty}(\mu(A^k)-\mu_N(A^k))_{+}M_p(\mu^k)>x\sum_{k=0}^{\infty}\sqrt{\mu(A^k)}M_p(\mu^k)\right)
    \le \P\left(\sup_{k\in \N}\frac{(\mu(A^k)-\mu_N(A^k))_{+}}{\sqrt{\mu(A^k)}}>x\right).
\end{align}
Empirical process theory allows us to bound the right hand side by $C e^{-cN x^2}$, see Lemma~\ref{lem:relbd} for details. 

Combining the above estimates for the three series on the right hand side of \eqref{eq:sketch}, we finally find
\begin{align}
    \P(\t_p(\mu, \mu_N)>(1\vee M_{q}(\mu))^{1/2}x)\le Ce^{-cN x^2}+\P(M_p(\mu_N)-M_p(\mu)>x)\,
\end{align}
which was the claim.

\subsection{Proof of Theorem \ref{thm:goal}} \label{sec_formal_proof}

The proof is organized as follows: Lemmas~\ref{lem:compute bd} and~\ref{lem:phi} state some useful technical results that will be used repeatedly. We then record the Fournier--Guillin concentration inequality between $\mu$ and $\mu_N$ for compactly supported measures in Lemma~\ref{lem:cmpbd}. Lemmas~\ref{lem:cond},~\ref{lem:cpt union bd} and~\ref{lem:cpt union bd2} are extensions of Lemma~\ref{lem:cmpbd}, and adapt it to our setting. Lemma~\ref{lem:relbd} proves a concentration inequality for self-normalized empirical processes. Lastly we combine these results to complete the proof of Theorem~\ref{thm:goal}. Throughout this section, the notation established in Subsections~\ref{S_notation} and~\ref{sec_partitions} is used freely.

\begin{lem}\label{lem:compute bd} Let $Y_{k, N}$ be a random variable indexed by $k, N\in \N$. Suppose there exists a Borel measurable function $h\colon (0,\infty)\to (0,\infty)$ and constants $A, \beta>0$, $a>1$ such that
\begin{align}
    \P(Y_{k, N}>x)
    \le Ae^{-(1+k\log(a))N^{\beta}h(x)}
    =A a^{-kN^{\beta}h(x)} e^{-N^{\beta}h(x)} \text{ for all } k, N\in \N\text{ and } x>0.
\end{align}
Then
\begin{align}
    \P\left(\sup_{k\in \N}Y_{k, N}>x\right)
    \le \left(e\vee \frac{A}{1-a^{-1}}\right)e^{- N^{\beta}h(x)} \text{ for all } N\in \N\text{ and } x>0.
\end{align}
\end{lem}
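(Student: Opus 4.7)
The plan is to apply a union bound to $\sup_{k \in \N} Y_{k,N}$, then sum the resulting geometric series. The only subtlety is that the geometric series converges slowly when $N^{\beta} h(x)$ is small, so I will handle this regime separately by exploiting the trivial bound $\P \le 1$.

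Write $t := N^{\beta} h(x)$ and $C := e \vee \frac{A}{1-a^{-1}}$, so that the desired bound reads $\P(\sup_k Y_{k,N} > x) \le C e^{-t}$. First I would dispose of the case $t < 1$: since $C \ge e$, one has $C e^{-t} > C e^{-1} \ge 1$, and the inequality holds trivially because probabilities are bounded by $1$. (This case also absorbs $N = 0$, where $t = 0$.)

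In the complementary case $t \ge 1$, the union bound together with the hypothesis yields
\[ \P\!\left(\sup_{k \in \N} Y_{k,N} > x\right) \le \sum_{k=0}^{\infty} \P(Y_{k,N} > x) \le A e^{-t} \sum_{k=0}^{\infty} a^{-kt} = \frac{A e^{-t}}{1 - a^{-t}}. \]
Since $a > 1$ and $t \ge 1$, one has $a^{-t} \le a^{-1}$, hence $1 - a^{-t} \ge 1 - a^{-1}$. Combining these gives $\P(\sup_k Y_{k,N} > x) \le \frac{A}{1 - a^{-1}} e^{-t} \le C e^{-t}$, as desired.

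The argument is essentially mechanical, so I do not anticipate any real obstacle. The one point worth emphasizing is the role of the constant $e$ in $C = e \vee A/(1-a^{-1})$: it absorbs the blow-up of $1/(1-a^{-t})$ as $t \to 0^+$ via the trivial probability bound, which is what allows a single constant $C$ to work uniformly in $N$ and $x$.
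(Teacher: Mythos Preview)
Your argument is correct and essentially identical to the paper's: both split on whether $t=N^\beta h(x)$ exceeds $1$, use the trivial bound $\P\le 1\le e\cdot e^{-t}$ when $t\le 1$, and sum the geometric series with ratio $a^{-t}\le a^{-1}$ when $t\ge 1$. The only cosmetic difference is which side of the split absorbs the boundary point $t=1$.
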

\begin{proof}
By a union bound, we have
\begin{align}
    \P\left(\sup_{k\in \N}Y_{k, N}>x\right)
    \le Ae^{-N^{\beta}h(x)}\sum_{k=0}^{\infty}a^{-kN^{\beta}h(x)}
    =\frac{A}{1-a^{- N^{\beta}h(x)}}e^{- N^{\beta}h(x)}.
\end{align}
If $N^{\beta}h(x)>1$, $\frac{A}{1-a^{- N^{\beta}h(x)}}<\frac{A}{1-a^{-1}}$ follows, which proves the desired estimate. If $N^{\beta}h(x)\le 1$ we can bound
\begin{align}
    \P\left(\sup_{k\in \N}Y_{k, N}>x\right)
    \le 1
    =ee^{-1}
    \le e\cdot e^{-N^{\beta}h(x)}.
\end{align}
This finishes the proof.
\end{proof}

Next we prove some properties of the modified rate function $\varphi_\eta$ defined in \eqref{eq_phi_eta}.

\begin{lem}\label{lem:phi} Let $0<\eta\le \frac{1}{2} \wedge \frac{p}{d}$ and $x>0$. Then the following hold:
\begin{anumerate}
    \item\label{lem:phi a} If $a\ge 1$, $\varphi_{\eta}(ax)\ge a^2\varphi_{\eta}(x)$.
    \item\label{lem:phi b} If $x^{1/\eta}\le a\le 1$, $a\varphi(x/a^{\eta})\ge \varphi_{\eta}(x)$. 
    \item\label{lem:phi c} If $a\ge x$, $\varphi_{\eta}(x/a)\ge A\varphi_{\eta}(x)$ for some constant $A>0$ depending only on $a$ and $\eta$.
\end{anumerate}
\end{lem}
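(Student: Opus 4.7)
The overall strategy is case analysis driven by the piecewise definition of $\varphi_\eta$ (two regimes, depending on whether $p=d/2$), and for part~\ref{lem:phi b} also the piecewise definition of $\varphi$ (three regimes). Two elementary observations will drive all the computations: $\eta \le 1/2$ gives $1/\eta \ge 2$, and $\eta \le p/d$ gives $1/\eta \ge d/p$. A useful algebraic identity that appears repeatedly is $1 - \eta s = \eta(1/\eta - s)$ for any real $s$, which allows one to factor out a common quantity in the exponents.

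For part~\ref{lem:phi a}, the bound is essentially immediate. When $p\neq d/2$ I would write $\varphi_\eta(ax) = a^{1/\eta} x^{1/\eta}$ and note $a^{1/\eta}\ge a^2$ since $a\ge 1$ and $1/\eta \ge 2$. When $p=d/2$, the same power calculation applies to the $x^{1/\eta}$ factor, and the logarithmic factor only helps us because $1/(ax) \le 1/x$ implies $\log(2+1/(ax)) \le \log(2+1/x)$.

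Part~\ref{lem:phi b} is the main technical step. I would substitute the definition of $\varphi$ into $a\varphi(x/a^\eta)$ in each of the three cases. For $p>d/2$ the desired inequality $ax^2/a^{2\eta} \ge x^{1/\eta}$ reduces to $a^{1-2\eta}\ge x^{1/\eta-2}$. Taking logarithms and factoring using $1 - 2\eta = \eta(1/\eta - 2)$ (valid since both factors have the same sign, with $\eta<1/2$; the boundary case $\eta=1/2$ is trivial since both exponents vanish), this becomes $\log a \ge (1/\eta)\log x$, i.e.\ exactly the hypothesis $a \ge x^{1/\eta}$. The case $p<d/2$ is parallel, reducing to $a^{1-\eta d/p}\ge x^{1/\eta - d/p}$ and using $1 - \eta d/p = \eta(1/\eta - d/p)$. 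The case $p=d/2$ combines the $p>d/2$ computation with the additional ingredient that $a^\eta\le 1$ implies $\log(2+a^\eta/x) \le \log(2+1/x)$, so the logarithmic factors only help. The main obstacle here is keeping the direction of the exponent and logarithm inequalities straight, which I would resolve by consistently reducing everything to the hypothesis $a\ge x^{1/\eta}$.

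For part~\ref{lem:phi c}, when $p\neq d/2$ the ratio $\varphi_\eta(x/a)/\varphi_\eta(x) = a^{-1/\eta}$ depends only on $a$ and $\eta$, so one may take $A = a^{-1/\eta}$. When $p=d/2$, the same power factor appears, multiplied by $\bigl(\log(2+1/x)/\log(2+a/x)\bigr)^2$. I would argue that the function $x\mapsto \log(2+1/x)/\log(2+a/x)$ is continuous and strictly positive on $(0,\infty)$ and tends to $1$ as $x\to 0^+$ (both logs behave like $\log(1/x)$) and as $x\to\infty$ (both tend to $\log 2$). Hence its infimum over $(0,\infty)$ is a strictly positive constant depending only on $a$, which furnishes the required $A$.
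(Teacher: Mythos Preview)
Your proposal is correct and follows essentially the same route as the paper. Parts~\ref{lem:phi a} and~\ref{lem:phi b} match the paper's argument almost verbatim; the only difference in part~\ref{lem:phi c} is that for $p=d/2$ the paper produces an explicit constant via the chain $\log(2+a/x)\le \log(a\vee 1)+\log(2+1/x)\le \log(2+1/x)\bigl(1+\log(a\vee 1)/\log(2+1/a)\bigr)$ (using $x\le a$), whereas you extract the same conclusion by a soft compactness-and-limits argument on the ratio $\log(2+1/x)/\log(2+a/x)$---both are valid, yours is slightly cleaner, the paper's is more explicit.
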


\begin{proof}
$\ref{lem:phi a}$: Let $a\ge 1$. If $p\neq d/2$, then $\varphi_{\eta}(ax)=a^{1/\eta}\varphi_{\eta}(x)\ge a^2\varphi_{\eta}(x)$. If $p= d/2$, then $$\varphi_{\eta}(ax)=(ax)^{1/\eta}/(\log(1+1/(ax)))^2\ge (ax)^{1/\eta}/(\log(1+1/x))^2= a^{1/\eta} \varphi_{\eta}(x)\ge a^{2}\varphi_{\eta}(x).$$ This proves $\ref{lem:phi a}$.

$\ref{lem:phi b}$: Let $x^{1/\eta}\le a\le 1$. Using the fact that $1-2\eta\ge 0$ if $p\ge d/2$ and $1-\eta d/p\ge 0$ if $p\in (0,d/2)$, we compute
\begin{align}
    a\varphi(x/a^{\eta})
    =\begin{dcases}
    a^{1-2\eta}x^2\ge (x^{1/\eta})^{1-2\eta}x^2=\varphi_{\eta}(x) & \text{ if } p>d/2\,\\
    \frac{a^{1-2\eta}x^2}{(\log(2+a^{\eta}/x))^2}\ge \frac{(x^{1/\eta})^{1-2\eta}x^2}{(\log(2+1/x))^2}=\varphi_{\eta}(x) & \text{ if } p=d/2\,\\
    a^{1-\eta d/p}x^{d/p}\ge (x^{1/\eta})^{1-\eta d/p}x^{d/p}=\varphi_{\eta}(x) & \text{ if } p\in(0,d/2).
    \end{dcases}
\end{align}
This proves $\ref{lem:phi b}$.

$\ref{lem:phi c}$: Let $a\ge x$. Note that
\begin{align}
    \log(2+a/x)
    \le \log(2(a\vee 1)+(a\vee 1)/x)
    &=\log(a\vee 1)+\log(2+1/x)\\
    &\le \log(2+1/x)(\log(a\vee 1)/\log(2+1/a)+1).
\end{align}
Thus, if $p=d/2$, $\varphi_{\eta}(x/a)\ge a^{-1/\eta}(\log(2+1/a)/(\log(a\vee 1)+\log(2+1/a)))^2\varphi_{\eta}(x)$. If $p\neq d/2$, set $A=a^{-1/\eta}$.
\end{proof}

Next we state the Fournier--Guillin rate for compactly supported measures. Indeed, combining \cite[Proposition 10]{fournier2015rate} with \cite[Lemma 5]{fournier2015rate}, the following lemma easily follows from a dilation argument.

\begin{lem}[Fournier--Guillin, local estimate]\label{lem:cmpbd}
Let $p\in (0,\infty)$ and $d\ge 1$. Let us fix $R>0$. Then there exist constants $c>0$ depending only on $d,p,R$ and $C>0$ depending only on $d,p$ such that for any $\mu\in \sp(\R^d)$ supported on $\overline{B_R(0)}$ one has
\begin{align}
    \P(\t_p(\mu, \mu_N) > x)
    \le Ce^{-cN \varphi(x)}\ind{\{x\le (2R)^p\}}\,
\end{align}
for all $x>0$ and $N\in \N$.
\end{lem}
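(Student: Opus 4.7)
The plan is to reduce to the already established Fournier--Guillin concentration inequality on a fixed reference ball and then pass to the general case by a simple dilation argument. Combining \cite[Proposition~10]{fournier2015rate} with \cite[Lemma~5]{fournier2015rate} gives a bound of the form
\[
\P(\t_p(\tilde\mu,\tilde\mu_N)>y)\le C\, e^{-cN\varphi(y)}\ind{\{y\le 1\}}
\]
valid for every $\tilde\mu\in\sp(\R^d)$ supported on $\overline{B_{1/2}(0)}$, with constants $c,C>0$ that depend only on $d$ and $p$.

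To transfer this to general $R>0$, I will introduce the dilation $D_R\colon\R^d\to\R^d$, $D_R(x):=x/(2R)$. If $\mu$ is supported on $\overline{B_R(0)}$, then $\tilde\mu:=(D_R)_{\#}\mu$ is supported on $\overline{B_{1/2}(0)}$. If $X_1,\dots,X_N$ are i.i.d.\ from $\mu$, then $D_R(X_1),\dots,D_R(X_N)$ are i.i.d.\ from $\tilde\mu$ with empirical measure $\tilde\mu_N=(D_R)_{\#}\mu_N$. Pushing forward any coupling of $(\mu,\mu_N)$ by $(D_R,D_R)$ (and conversely) shows that
\[
\t_p(\tilde\mu,\tilde\mu_N)=(2R)^{-p}\,\t_p(\mu,\mu_N).
\]
Applying the reference inequality to $\tilde\mu$ at $y=x/(2R)^p$ therefore yields
\[
\P(\t_p(\mu,\mu_N)>x)\le C\,e^{-cN\varphi(x/(2R)^p)}\ind{\{x\le (2R)^p\}}.
\]

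To conclude, I will observe that on the event $\{x\le (2R)^p\}$ we have $(2R)^p\ge x$, so Lemma~\ref{lem:phi}\ref{lem:phi c} applied with $\eta=\tfrac12\wedge\tfrac{p}{d}$ and $a=(2R)^p$ gives $\varphi(x/(2R)^p)\ge A\,\varphi(x)$ for a constant $A>0$ that depends only on $R,d,p$. Absorbing $A$ into $c$ produces the claimed bound with a constant $c$ that now also depends on $R$, while $C$ still only depends on $d,p$.

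The only step requiring any care is the transformation of the rate function under scaling: in the non-critical regimes $p\neq d/2$ the inequality $\varphi(x/(2R)^p)\ge A\varphi(x)$ is immediate from homogeneity, but in the critical case $p=d/2$ one has to control the logarithmic factor, which is precisely the content of Lemma~\ref{lem:phi}\ref{lem:phi c}. Beyond this, the argument is entirely routine.
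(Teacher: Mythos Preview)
Your proposal is correct and follows exactly the approach the paper indicates: it cites \cite[Proposition~10]{fournier2015rate} and \cite[Lemma~5]{fournier2015rate} for the reference bound on $\overline{B_{1/2}(0)}$ and then applies a dilation, which is precisely what the paper means by ``easily follows from a dilation argument.'' Your explicit invocation of Lemma~\ref{lem:phi}\ref{lem:phi c} to handle the logarithmic correction in the critical case $p=d/2$ is the right way to make the scaling of $\varphi$ rigorous, and the resulting dependence of the constants ($C$ on $d,p$ only, $c$ additionally on $R$) matches the statement.
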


In the next lemma we prove that conditionally on $\g_N$, $\mu^k_N$ has the same distribution as $(\mu^k)_{N\mu_N(A^k)}$, which denotes the empirical measure of $\mu^k$ with a sample size $N\mu_N(A^k)$. Recall that $A^k$ are the annuli defined in Subsection~\ref{sec_partitions}.

\begin{lem}\label{lem:cond} Let $N\in \N$ and $(L_{k})_{k\in \N}\in \mathbf{L}_N$. Conditionally on $\g_N$, $(\mu^k_N)_{k\in \N}$ are independent. Moreover, the distribution of $\mu^k_N$ under $\P^{(L_{k})}$ is equal to the distribution of $(\mu^k)_{L_{k}}$.
\end{lem}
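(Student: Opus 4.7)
The plan is to refine the conditioning by first working with a finer $\sigma$-algebra that records which sample lands in which annulus, and then project back onto $\g_N$. Concretely, define annulus labels $\kappa_i \colon \Omega \to \N$ by $\kappa_i := k$ on the event $\{X_i \in A^k\}$, and let $\h_N := \sigma(\kappa_1, \dots, \kappa_N)$. Since $N\mu_N(A^k) = \#\{i \le N : \kappa_i = k\}$, we have $\g_N \subseteq \h_N$, and each atom of $\g_N$ decomposes into finitely many atoms of $\h_N$ indexed by the possible assignments $(k_1,\dots,k_N)$ compatible with the prescribed counts $(L_k)$.

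Next, I would establish the basic conditional i.i.d.\ structure: for any Borel sets $B_1,\dots,B_N$ and any label vector $(k_1,\dots,k_N)$ with $\P(\kappa_i = k_i \text{ for all } i) > 0$, independence of the $X_i$ and the definition of $\mu^k$ as $\P(X_1 \in \cdot \mid X_1 \in A^k)$ yield
\begin{align}
\P\Bigl(X_i \in B_i \text{ for all } i \,\Big|\, \kappa_i = k_i \text{ for all } i\Bigr)
= \prod_{i=1}^{N} \mu^{k_i}(B_i).
\end{align}
Thus, conditionally on $\h_N$, the $X_i$ are independent with $X_i \sim \mu^{\kappa_i}$.

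Now fix $(L_k)_{k\in \N} \in \mathbf{L}_N$ and a label vector realizing these counts. Since $\mu_N^k$ is determined by $\{X_i : \kappa_i = k\}$ (it equals $(1/L_k)\sum_{i:\kappa_i=k} \delta_{X_i}$ when $L_k \ge 1$, and $\delta_0$ when $L_k = 0$, matching the definition of $(\mu^k)_{L_k}$), the previous step implies that, conditionally on this $\h_N$-atom, the family $(\mu_N^k)_{k\in\N}$ is a collection of independent empirical measures with $\mu_N^k \stackrel{d}{=} (\mu^k)_{L_k}$. Crucially, this joint conditional distribution depends on the $\h_N$-atom only through the counts $(L_k)$: a permutation of labels that preserves the counts produces the same joint law by exchangeability of the $X_i$. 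Averaging the identical $\h_N$-conditional laws over the finitely many $\h_N$-atoms contained in a given $\g_N$-atom and invoking the tower property therefore yields the same conditional law under $\P^{(L_k)}$, which is exactly the claim.

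The only nontrivial point is the permutation-invariance step: one must verify that every $\h_N$-atom inside a given $\g_N$-atom gives rise to the same joint conditional distribution of $(\mu_N^k)_k$, so that the $\g_N$-conditional law is unambiguous. This is a standard exchangeability argument using that $(X_1,\dots,X_N)$ is exchangeable and that $\mu_N^k$ is symmetric in the samples. Bookkeeping of the degenerate case $L_k = 0$, where $\mu_N^k := \delta_0 = (\mu^k)_0$ by convention, is trivial once one adopts the same convention on both sides.
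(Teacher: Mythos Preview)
Your proposal is correct and follows essentially the same approach as the paper. The paper's Step~1 conditions on the event $\{X_{I_k}\subseteq A^k \text{ for all }k\}$ for a fixed partition $(I_k)$ of $\{1,\dots,N\}$ with $|I_k|=L_k$, which is precisely an atom of your finer $\sigma$-algebra $\h_N$; its Step~2 then averages over all such partitions to pass to the $\g_N$-atom, exactly as you do via the tower property. The only cosmetic difference is that the paper observes directly that the Step~1 conditional law equals $\prod_k \P((\mu^k)_{L_k}\in B_k)$, which visibly depends only on the counts, whereas you invoke exchangeability; either justification works.
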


\begin{proof}
Let $(L_{k})_{k\in \N}\in \mathbf{L}_N$. To prove the claim it suffices to show that for all measurable sets $B_k \subseteq \sp(\R^d)$,
\begin{align}
    \P\left(\mu^k_N\in B_k \text{ for all }k\in \N \st N\mu_N(A^k)=L_{k} \text{ for all } k\in \N\right)
    =\prod_{k=0}^{\infty}\P\left( (\mu^k)_{L_{k}}\in B_k\right)\,
\end{align}
whenever $\P(N\mu_N(A^k)=L_{k} \text{ for all } k\in \N)>0$. Throughout the proof, let us fix $(L_{k})_{k\in \N}\in \mathbf{L}_N$ such that $\P(N\mu_N(A^k)=L_{k} \text{ for all } k\in \N)>0$. We also fix i.i.d.~samples $X_1, X_2, \dots$ of $\mu$. The proof is divided into two steps.\\

\textbf{Step 1:} Let $(I_k)_{k\in \N}$ be a partition of $\{1,2,\dots, N\}$ such that $\abs{I_k}=L_{k}$ for all $k\in\N$. We write $X_{J}:=\{X_{i} \st i\in J\}$ for any $J \subseteq \N$. In particular, $X_\varnothing = \varnothing$. Let us first prove that
\begin{align}
    \P\left(\mu_N^k\in B_k \text{ for all } k\in \N \st X_{I_k}\subseteq A^k \text{ for all } k\in \N\right)
    =\prod_{k=0}^{\infty}\P\left( (\mu^k)_{L_{k}}\in B_k\right)\,
\end{align}
whenever $\P(X_{I_k}\subseteq A^k \text{ for all } k\in \N)>0$. From independence we obtain
\begin{align}
    &\P\left(\mu_N^k\in B_k \text{ for all } k\in \N \st X_{I_k}\subseteq A^k \text{ for all } k\in \N\right)\\
    &=\P\left(\frac{1}{\abs{I_k}}\sum_{j\in I_k}\delta_{X_{j}}\in B_k \text{ for all } k\in \N \text{ such that } I_k\neq \varnothing \st X_{I_k}\subseteq A^k \text{ for all } k\in \N\right)\\
    &\quad \times \P\left(\delta_0\in B_k \text{ for all } k\in \N \text{ such that } I_k=\varnothing\right)\\
    &=\prod_{k\in\N : I_k\neq \varnothing}
    \P\left(\frac{1}{\abs{I_k}}\sum_{j\in I_k}\delta_{X_{j}}\in B_k \st  X_{I_k}\subseteq A^k\right)
    \times \prod_{k\in\N : I_k=\varnothing}
    \P\left(\delta_0\in B_k \right).
\end{align}
If $I_k\neq \varnothing$, then under $\P_k(\cdot):=\P(\cdot  \st X_{I_k}\subseteq A^k )$, $\{X_{i} \st i\in I_k\}$ are i.i.d.~samples of $\mu^k$, and the sample size is $L_{k}=\abs{I_k}$. Recall that if $I_k=\varnothing$, the empirical measure $(\mu^k)_{L_k}$ has sample size $0$ and is therefore equal to $\delta_0$ by convention. Thus, we have proven the desired result.\\

\textbf{Step 2:} Let us write
\begin{align}
    &\mathcal{J}:=\{ (I_k)_{k\in \N} \st (I_k)_{k\in \N} \text{ is a partition of } \{1,2,\dots, N\} \text{ such that } \abs{I_k}=L_{k} \text{ for all } k\in \N\},\\
    &\mathcal{J}_0:=\{(I_k)_{k\in \N} \in \mathcal{J} \st \P(X_{I_k}\subseteq A^k \text{ for all } k\in \N)>0\}.
\end{align}
Note that
\begin{align}
    \left\{N\mu_N(A^k)=L_{k} \text{ for all } k\in \N\right\}
    =\bigcup_{(I_k)_{k\in \N}\in \mathcal{J}}\left\{ X_{I_k}\subseteq A^k \text{ for all } k\in \N\right\}\,
\end{align}
and the union is disjoint. In particular,
\begin{align}
    \P(N\mu_N(A^k)=L_{k} \text{ for all } k\in\N)
    =\sum_{(I_{k})_{k\in \N}\in \mathcal{J}_0}\P(X_{I_k}\subseteq A^k \text{ for all } k\in \N).
\end{align}
Thus it follows from the previous step that
\begin{align}
    &\P\left(\mu^k_N\in B_k \text{ for all }k\in \N \st N\mu_N(A^k)=L_{k} \text{ for all } k\in \N\right)\\
    &=\sum_{(I_k)_{k\in \N}\in \mathcal{J}_0}\P\left(\mu_N^k\in B_k \text{ for all } k\in \N \st X_{I_k}\subseteq A^k \text{ for all } k\in \N\right)\frac{\P(X_{I_k}\subseteq A^k \text{ for all } k\in \N)}{\P(N\mu_N(A^k)=L_{k} \text{ for all } k\in \N)}\\
    &=\prod_{k=0}^{\infty}\P\left( (\mu^k)_{L_{k}}\in B_k\right)
    \frac{\sum_{(I_k)_{k\in \N}\in \mathcal{J}_0}\P(X_{I_k}\subseteq A^k \text{ for all } k\in \N)}{\P(N\mu_N(A^k)=L_{k} \text{ for all } k\in \N)}\\
    &=\prod_{k=0}^{\infty}\P\left( (\mu^k)_{L_{k}}\in B_k\right).
\end{align}
This ends the proof.
\end{proof}

In the next two lemmas, we apply the local estimates of Lemma~\ref{lem:cmpbd} to estimate uniform deviation probabilities that will appear in the proof of Theorem~\ref{thm:goal}. 

\begin{lem}\label{lem:cpt union bd} Suppose Assumption~\ref{ass:cst} is satisfied and fix $\delta>0$. Then the following hold:
\begin{anumerate}
    \item\label{lem:cpt union bd a} There exist positive constants $c$ and $C$ such that for all $k,N\in \N$ and $x>0$,
    \begin{align}
        \P\left(\frac{2^{-k\delta}}{g(2^{k+1})}\big(\mu(A^k)\wedge \mu_N(A^k)\big)^{\eta}\d_f(\mu^k, \mu^k_N) > x \st \g_N\right)
        \le Ce^{-c4^{k\delta}N\varphi_{\eta}(x)}\ind{\{x\le 1\}} \,\,\as 
    \end{align}
    where $c$ and $C$ depend only on $d,p$. Here $0/0$ is understood as $0$.
    \item\label{lem:cpt union bd b} There exist positive constants $c$ and $C$ such that for all $N\in \N$ and $x>0$,
    \begin{align}
        \P\left(\sup_{k\in \N}\left(\frac{2^{-k\delta}}{g(2^{k+1})}\big(\mu(A^k)\wedge \mu_N(A^k)\big)^{\eta}\d_f(\mu^k, \mu^k_N)\right) > x \right)
    \le Ce^{-c N \varphi_{\eta}(x)}\ind{\{x\le 1\}},
    \end{align}
    where $c$ depends only on $d,p$ and $C$ depends only on $d,p,\delta$. Here $0/0$ is understood as $0$.
\end{anumerate}
\end{lem}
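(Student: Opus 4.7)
The plan for part~\ref{lem:cpt union bd a} is to bound $\d_f$ by a multiple of $\t_p$ via the local polynomial growth condition, rescale to the unit ball using the dilation $T^k$ from Subsection~\ref{sec_partitions}, and invoke the Fournier--Guillin estimate of Lemma~\ref{lem:cmpbd} conditionally on $\g_N$. For $x,y\in A^k$ we have $\abs{x-y}\le 2^{k+1}$, so Assumption~\ref{ass:cst}\ref{ass:loc g} with $R=2^{k+1}$ gives $f(\abs{x-y})\le g(2^{k+1})\,2^{-(k+1)p}\abs{x-y}^p$. Integrating against an optimal $\t_p$-coupling and using the scaling identity $\t_p(\mu^k,\mu^k_N)=2^{kp}\t_p(T^k_\#\mu^k,T^k_\#\mu^k_N)$, one obtains
\[
\d_f(\mu^k,\mu^k_N)\le 2^{-p}g(2^{k+1})\,\t_p(T^k_\#\mu^k,T^k_\#\mu^k_N),
\]
where both pushforwards are supported on $\overline{B_1(0)}$.

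Denote by $u_k$ the random variable on the left-hand side of~\ref{lem:cpt union bd a}, and set $m:=\mu(A^k)\wedge\mu_N(A^k)$ together with $y:=2^{p+k\delta}x/m^{\eta}$. The display above implies $\{u_k>x\}\subseteq \{\t_p(T^k_\#\mu^k,T^k_\#\mu^k_N)>y\}$. By Lemma~\ref{lem:cond}, conditional on $\g_N$ the measure $\mu^k_N$ is distributed as $(\mu^k)_{L_k}$ with $L_k:=N\mu_N(A^k)$, so Lemma~\ref{lem:cmpbd} applied with $R=1$ yields
\[
\P(u_k>x\st\g_N)\le Ce^{-cL_k\varphi(y)}\ind{\{y\le 2^p\}}.
\]
On the event $\{y\le 2^p\}$ one has $x^{1/\eta}\le m/2^{k\delta/\eta}\le m\le 1$, hence in particular $x\le 1$. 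Applying Lemma~\ref{lem:phi}\ref{lem:phi a} to $\varphi=\varphi_{\frac{1}{2}\wedge \frac{p}{d}}$ with $a=2^{p+k\delta}\ge 1$ and then Lemma~\ref{lem:phi}\ref{lem:phi b} with $a=m$ produces $\varphi(y)\ge 4^{p+k\delta}\varphi(x/m^{\eta})\ge 4^{p+k\delta}\varphi_\eta(x)/m$, and combining with $L_k\ge Nm$ yields $L_k\varphi(y)\ge 4^{k\delta}N\varphi_\eta(x)$, which proves~\ref{lem:cpt union bd a}. The degenerate cases $m=0$ or $g(2^{k+1})=0$ give $u_k=0$ by the $0/0$ convention, while $y>2^p$ forces $u_k\le 2^{-k\delta}m^\eta<x$; in either situation the bound is trivial.

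For part~\ref{lem:cpt union bd b}, integrating the conditional estimate against $\g_N$ gives $\P(u_k>x)\le Ce^{-c4^{k\delta}N\varphi_\eta(x)}$ on $\{x\le 1\}$. Bernoulli's inequality $4^{k\delta}\ge 1+k(4^{\delta}-1)$ recasts this in the format of Lemma~\ref{lem:compute bd} with $\beta=1$, $h=c\varphi_{\eta}$ and $a=\exp(4^\delta-1)>1$, producing a bound of the form $Ce^{-cN\varphi_\eta(x)}$ in which the exponent constant depends only on $d,p$ while the multiplicative constant additionally depends on $\delta$. The indicator $\ind{\{x\le 1\}}$ is preserved because the a priori bound $u_k\le 2^{-k\delta}\le 1$ forces $\sup_k u_k\le 1$ almost surely.

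The main obstacle is aligning all scalings in the single inequality $L_k\varphi(y)\ge 4^{k\delta}N\varphi_\eta(x)$: the factor $2^p$ comes from the radius of $A^0$, the factor $2^{k\delta}$ is the summability discount used in part~\ref{lem:cpt union bd b}, the factor $m^{-\eta}$ in $y$ is absorbed through Lemma~\ref{lem:phi}\ref{lem:phi b}, and the mass $m$ lost in that step is recovered from $L_k\ge Nm$. Once this bookkeeping is complete, the remaining steps are direct applications of Lemmas~\ref{lem:cond},~\ref{lem:cmpbd},~\ref{lem:compute bd} together with Lemma~\ref{lem:phi}.
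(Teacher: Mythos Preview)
Your proof is correct and follows essentially the same route as the paper: bound $\d_f$ by $g(2^{k+1})\t_p$ via the local growth condition, rescale to the unit ball, apply Lemma~\ref{lem:cond} and Lemma~\ref{lem:cmpbd} conditionally on $\g_N$, and then use Lemma~\ref{lem:phi}\ref{lem:phi a}--\ref{lem:phi b} to convert $L_k\varphi(y)$ into $4^{k\delta}N\varphi_\eta(x)$. The only cosmetic differences are that the paper first isolates an intermediate claim in the variable $y$ and applies Lemma~\ref{lem:phi}\ref{lem:phi b} with $a=\mu_N(A^k)$ rather than $a=m$ (your extra step $L_k\ge Nm$ compensates), and for part~\ref{lem:cpt union bd b} it uses $4^{k\delta}\ge 1+k\log(4^\delta)$ with $a=4^\delta$ in Lemma~\ref{lem:compute bd} instead of your Bernoulli bound with $a=\exp(4^\delta-1)$.
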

\begin{proof}
$\ref{lem:cpt union bd a}$: If $\mu(A^k)=0$ or $\mu_N(A^k)=0$, the probability in the statement vanishes. Thus, we may assume that $\mu(A^k)> 0$ and $\mu_N(A^k)> 0$ throughout the proof. We claim that it suffices to prove that
\begin{align}\label{eq:cpt union bd clm}
    \P\left(\frac{2^{-k\delta}}{g(2^{k+1})}\d_f(\mu^k, \mu^k_N) > y \st \g_N\right)
    \le Ce^{-c4^{k\delta}N\mu_N(A^k)\varphi(y)}\ind{\{y\le 1\}} \,\,\as
\end{align}
Once \eqref{eq:cpt union bd clm} is proven, set $x=y\big(\mu(A^k)\wedge \mu_N(A^k)\big)^{\eta}$. Then \eqref{eq:cpt union bd clm} gives 
\begin{align}
    &\P\left(\frac{2^{-k\delta}}{g(2^{k+1})}\big(\mu(A^k)\wedge \mu_N(A^k)\big)^{\eta}\d_f(\mu^k, \mu^k_N) > x \st \g_N\right)\\
    &\le C\exp\left(-c 4^{k\delta} N\mu_N(A^k)\varphi\left(\frac{x}{\big(\mu(A^k)\wedge \mu_N(A^k)\big)^{\eta}}\right)\right)\ind{\{x\le (\mu(A^k)\wedge \mu_N(A^k))^{\eta}\}} \,\,\as
\end{align}
Using Lemma~\ref{lem:phi}$\ref{lem:phi b}$ and $\mu_N(A^k)\le 1$, we deduce that
\begin{align}
    &C\exp\left(-c 4^{k\delta} N\mu_N(A^k)\varphi\left(\frac{x}{\big(\mu(A^k)\wedge \mu_N(A^k)\big)^{\eta}}\right)\right)\ind{\{x\le (\mu(A^k)\wedge \mu_N(A^k))^{\eta}\}}\\
    &\le C\exp\left(-c 4^{k\delta} N\mu_N(A^k)\varphi\left(\frac{x}{( \mu_N(A^k))^{\eta}}\right)\right)\ind{\{x\le ( \mu_N(A^k))^{\eta}\}}\\
    &\le C\exp\left(-c 4^{k\delta} N\varphi_{\eta}(x)\right)\ind{\{x\le ( \mu_N(A^k))^{\eta}\}} \\
    &\le C\exp\left(-c 4^{k\delta} N\varphi_{\eta}(x)\right)\ind{\{x\le 1\}}.
\end{align}
Hence it is enough to prove \eqref{eq:cpt union bd clm}.

Now, let us prove \eqref{eq:cpt union bd clm}. From $\mu(A^k)>0$ and $\mu_N(A^k)>0$, $\mu^k$ and $\mu^k_N$ are supported on $A^k$. Thus $T^k_{\#}\mu^k$ and $T^k_{\#}\mu^k_N$ are supported on $A^0$. Recalling the (local) growth function $g$ from Assumption~\ref{ass:cst},
\begin{equation}~\label{eq:d,g}
\begin{aligned}
    \d_f(\mu^k, \mu^k_N)
    &=\inf_{\pi\in \text{Cpl}(\mu^k, \mu^k_N)}\int_{\R^d\times \R^d}f\left(2^{k}\abs{x-y}\right)\,(T^k, T^k)_{\#}\pi(dx,dy)\\
    &=\inf_{\pi\in \text{Cpl}(T^k_{\#}\mu^k, T^k_{\#}\mu^k_N)}\int_{\R^d\times \R^d}f(2^k\abs{x-y})\,\pi(dx, dy) \\
    &\le 2^{-p}g(2^{k+1})\t_p(T^k_{\#}\mu^k, T^k_{\#}\mu^k_N).
\end{aligned}
\end{equation}
Here, the inequality follows from $f(2^k\abs{x-y})=f(2^{k+1}\frac{\abs{x-y}}{2})\le 2^{-p}g(2^{k+1})\abs{x-y}^p$ if $x,y\in A^0$. The estimate \eqref{eq:d,g} combined with Lemmas~\ref{lem:cmpbd} and~\ref{lem:cond} yields that
\begin{equation} \label{eq:cpt union bd 1}
\begin{aligned}
    &\P\left(\frac{2^{-k\delta}}{g(2^{k+1})}\d_f(\mu^k, \mu^k_N)>y \st \g_N\right)\\
    &\le \P\bigg(\t_p(T^k_{\#}\mu^k, T^k_{\#}\mu^k_N)>2^{p+k\delta}y \st \g_N\bigg)\\
    &=\sum_{(L_{\ell})_{\ell\in \N}\in \mathbf{L}_N}\P^{(L_{\ell})}(\t_p(T^k_{\#}\mu^k, T^k_{\#}\mu^k_N)>2^{p+k\delta}y)\ind{\{N\mu_N(A^{\ell})=L_{\ell} \text{ for all } \ell\in \N\}}\\
    &=\sum_{(L_{\ell})_{\ell\in \N}\in \mathbf{L}_N}\P(\t_p(T^k_{\#}\mu^k, T^k_{\#}(\mu^k)_{L_{k}})>2^{p+k\delta}y)\ind{\{N\mu_N(A^{\ell})=L_{\ell} \text{ for all } \ell\in \N\}}\\
    &\le \sum_{(L_{\ell})_{\ell\in \N}\in \mathbf{L}_N}C e^{-cL_{k}\varphi(2^{p+k\delta} y)}\ind{\{N\mu_N(A^{\ell})=L_{\ell, N} \text{ for all } \ell\in \N\}} \\
    &=Ce^{-c N\mu_N(A^k)\varphi(2^{p+k\delta}y)}\,
\end{aligned}
\end{equation}
for some positive constants $c$ and $C$ that depend only on $d,p$. In the second inequality, we use the fact that $T^k_{\#}(\mu^k)_{L_k}$ has the same distribution as the empirical measure of $T^k_{\#}\mu^k$ with sample size $L_k$. From Lemma~\ref{lem:phi}$\ref{lem:phi a}$, $\varphi(2^{p+k\delta}y)\ge 4^p 4^{k\delta}\varphi(y)$. Plugging this back into \eqref{eq:cpt union bd 1}, we get the desired estimate where $4^p$ is absorbed into the constant $c$. The fact that $y>1$ gives zero probability is because $\d_f(\mu^k, \mu^k_N)\le g(2^{k+1})$ which follows from \eqref{eq:d,g} and $\t_p(T^k_{\#}\mu^k, T^k_{\#}\mu^k_N)\le 2^p$.\\

$\ref{lem:cpt union bd b}$: Using $4^{k\delta}\ge 1+k\log(4^{\delta})$, we can apply Lemma~\ref{lem:compute bd} to $\ref{lem:cpt union bd a}$ by setting $A=C$, $a=4^{\delta}$, $\beta=1$ and $h(x)=c\varphi_{\eta}(x)$. This completes the proof.
\end{proof}

\begin{lem}\label{lem:cpt union bd2} Suppose that Assumption~\ref{ass:cst} is satisfied and fix $\delta>0$. Then the following hold:
\begin{anumerate}
    \item\label{lem:cpt union bd2 a} For all $k, N\in\N$ and $x>0$,
    \begin{align}
        \P\left(\frac{2^{-k\delta}}{2G(2^k)}\big(\mu(A^k)\wedge \mu_N(A^k)\big)^{\eta}\abs{M_1(\mu^k; G)-M_1(\mu^k_N; G)}\ge x \st \g_N\right)
        \le 2e^{-2^{1+2k\delta}N\varphi_{\eta}(x)}\ind{\{x\le 1\}}\,\,\as
    \end{align}
    Here $0/0$ is understood as $0$.
    \item\label{lem:cpt union bd2 b}  For all $N\in \N$ and $x>0$,
    \begin{align}
        \P\left(\sup_{k\in \N}\left(\frac{2^{-k\delta}}{2G(2^k)}\big(\mu(A^k)\wedge \mu_N(A^k)\big)^{\eta}\abs{M_1(\mu^k; G)-M_1(\mu^k_N; G)}\right)\ge x\right)
    \le Ce^{-2N\varphi_{\eta}(x)}\ind{\{x\le 1\}},
    \end{align}
    where $C=e\vee\frac{2}{1-4^{-\delta}}$. Here $0/0$ is understood as $0$.
\end{anumerate}
\end{lem}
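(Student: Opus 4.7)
The plan is to mirror the proof of Lemma~\ref{lem:cpt union bd}, with Hoeffding's inequality replacing the Fournier--Guillin local estimate. For part~(a), I would first dispose of the trivial case $\mu(A^k)\wedge\mu_N(A^k)=0$, in which the expression inside the probability vanishes. On the event $\{\mu(A^k)>0,\ \mu_N(A^k)>0\}$, the measures $\mu^k$ and $\mu^k_N$ are supported on $A^k\subseteq \overline{B_{2^k}(0)}$, and since $G$ is nondecreasing, $G(\abs{\cdot})$ takes values in $[0,G(2^k)]$ on these supports. By Lemma~\ref{lem:cond}, conditionally on $\g_N$ the measure $\mu^k_N$ is distributed as $(\mu^k)_{N\mu_N(A^k)}$, so Hoeffding's inequality yields almost surely
\[
\P\left(\abs{M_1(\mu^k;G)-M_1(\mu^k_N;G)}\ge t\st \g_N\right)\le 2\exp\left(-\frac{2N\mu_N(A^k)t^2}{G(2^k)^2}\right).
\]

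Next I would choose $t = 2G(2^k) 2^{k\delta}x/(\mu(A^k)\wedge \mu_N(A^k))^\eta$ to recast the probability in the form appearing in the statement; the Hoeffding exponent then becomes $-8N\mu_N(A^k) y^2$ with $y := 2^{k\delta}x/(\mu(A^k)\wedge \mu_N(A^k))^\eta$. If $y>1$, then $t>2G(2^k)$ exceeds the deterministic bound $G(2^k)$ on $\abs{M_1(\mu^k;G)-M_1(\mu^k_N;G)}$, so the probability vanishes. Assuming $y\le 1$, the bound $y^2\ge \varphi(y)$ holds; Lemma~\ref{lem:phi}\ref{lem:phi a} gives $\varphi(y)\ge 4^{k\delta}\varphi(x/(\mu(A^k)\wedge \mu_N(A^k))^\eta)$; and monotonicity of $\varphi$ allows replacing $\mu(A^k)\wedge \mu_N(A^k)$ by $\mu_N(A^k)$ in the denominator. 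Lemma~\ref{lem:phi}\ref{lem:phi b} applied with $a=\mu_N(A^k)$ (its hypothesis $x^{1/\eta}\le \mu_N(A^k)\le 1$ is implied by $y\le 1$) then yields $\mu_N(A^k)\varphi(x/\mu_N(A^k)^\eta)\ge \varphi_\eta(x)$. Concatenating these estimates delivers the exponent $-8\cdot 4^{k\delta}N\varphi_\eta(x)\le -2^{1+2k\delta}N\varphi_\eta(x)$. The factor $\ind{\{x\le 1\}}$ is valid because the quantity inside the probability is deterministically bounded by $1/2$.

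Part~(b) then follows directly from Lemma~\ref{lem:compute bd} applied to part~(a) with $A = 2$, $a = 4^\delta$, $\beta = 1$ and $h(x)=2\varphi_\eta(x)\ind{\{x\le 1\}}$; the required hypothesis $4^{k\delta}\ge 1+k\log(4^\delta)$ is just $e^u\ge 1+u$ at $u = k\delta\log 4$, and the resulting constant $e\vee 2/(1-4^{-\delta})$ matches the statement. The delicate point is the transition from Hoeffding's Gaussian-type tail in $y^2$ to the rate function $\varphi_\eta(x)$: one must track the regime of $y$ carefully and verify the hypotheses of Lemma~\ref{lem:phi}\ref{lem:phi b} before invoking it. The remainder of the argument is a direct analog of Lemma~\ref{lem:cpt union bd}.
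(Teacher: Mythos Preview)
Your proposal is correct and follows essentially the same approach as the paper's proof. The paper likewise reduces to a Hoeffding bound conditionally on $\g_N$ via Lemma~\ref{lem:cond}, uses $y^2\ge\varphi(y)$ for $y\le 1$, and then invokes Lemma~\ref{lem:phi}\ref{lem:phi b} (after the same monotonicity step replacing $\mu(A^k)\wedge\mu_N(A^k)$ by $\mu_N(A^k)$) to pass to $\varphi_\eta$; part~(b) is obtained from Lemma~\ref{lem:compute bd} with the identical parameters. The only cosmetic differences are that the paper absorbs the $2^{k\delta}$ factor into the threshold rather than into $y$, and bounds $(M-m)^2$ by $4G(2^k)^2$ rather than your sharper $G(2^k)^2$.
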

\begin{proof} $\ref{lem:cpt union bd2 a}$: As in the proof of Lemma~\ref{lem:cpt union bd}, we may assume that $\mu(A^k)>0$ and $\mu_N(A^k)>0$. Similarly, we obtain the desired result by setting $x=y\big(\mu(A^k)\wedge \mu_N(A^k)\big)^{\eta}$ and applying Lemma~\ref{lem:phi}$\ref{lem:phi b}$ to the following estimate:
\begin{align}\label{eq:cpt union bd2 clm}
    \P\left(\frac{2^{-k\delta}}{2G(2^k)}\abs{M_1(\mu^k; G)-M_1(\mu^k_N; G)}\ge y \st \g_N\right)
    \le 2e^{-2^{1+2k\delta}N\mu_N(A^k)\varphi(y)}\ind{\{y\le 1\}}\,\,\as
\end{align}
Let us prove \eqref{eq:cpt union bd2 clm}. Note that $\mu^k$ and $\mu^k_N$ are compactly supported on $A^k$. From $G(\abs{a})\le G(2^k)$ for $a\in A^k$ we obtain $\abs{M_1(\mu^k; G)-M_1(\mu^k_N; G)}\le 2G(2^k)$. Hence the probability in \eqref{eq:cpt union bd2 clm} vanishes when $G(2^k)=0$ or $y>1$. Now, further assume that $G(2^k)>0$ and $y\le 1$. Rewriting the probability in \eqref{eq:cpt union bd2 clm} using Lemma~\ref{lem:cond} and applying Hoeffding's lemma (see \cite[Theorem 2.1]{devroye2001combinatorial}), we get
\begin{equation} \label{eq:clm2 pf}
\begin{aligned}
    &\P\left(\frac{2^{-k\delta}}{2G(2^k)}\abs{M_1(\mu^k; G)-M_1(\mu^k_N; G)}\ge y \st \g_N\right)\\
    &=\sum_{(L_{\ell})_{\ell\in \N}\in \mathbf{L}_N}\P\left(\frac{2^{-k\delta}}{2G(2^k)}\abs{M_1(\mu^k; G)-M_1((\mu^k)_{L_{k}}; G)}\ge y\right)\ind{\{N\mu_N(A^{\ell})=L_{\ell} \text{ for all } \ell\in \N\}}\\
    &\le \sum_{(L_{\ell})_{\ell\in \N}\in \mathbf{L}_N}2 \exp\left(-2\frac{(2^{1+k\delta}G(2^k)L_{k}y)^2}{L_k (M-m)^2}\right)\ind{\{N\mu_N(A^{\ell})=L_{\ell} \text{ for all } \ell\in \N\}}\,
\end{aligned}
\end{equation}
for constants $m$ and $M$ such that $m \le G(|x|) \le M$ for all $x \in A^k$. Since $(M-m)^2\le 4G(2^k)^2$ and $y^2\ge \varphi(y)$ for $y\le 1$ we have
\begin{align}
    \exp\left(-2\frac{(2^{1+k\delta}G(2^k)L_{k}y)^2}{L_k (M-m)^2}\right)
    \le e^{-2^{1+2k\delta}L_{k}y^2}
    \le e^{-2^{1+2k\delta}L_{k}\varphi(y)}.
\end{align}
Plugging this back into \eqref{eq:clm2 pf}, we get the desired estimate.

$\ref{lem:cpt union bd2 b}$: Using $2^{2k\delta}\ge 1+k\log(4^{\delta})$, we can apply Lemma~\ref{lem:compute bd} to $\ref{lem:cpt union bd2 a}$ with $A=2$, $a=4^{\delta}$, $\beta=1$ and $h(x)=2\varphi_{\eta}(x)$. This completes the proof.
\end{proof}

The following lemma gives an upper bound on the uniform deviation of self-normalized empirical processes. The base case $\delta=0$ and $\alpha=1/2$ is studied in \cite{vapnik1974theory, anthony1993result, bartlett1999inequality} and \cite[Exercise 3.3, Exercise 3.4]{devroye2001combinatorial}. In these works, the upper bound depends on the shatter coefficient of the sets $\{A_k\}_{k\in \N}$. We improve this estimate by introducing a factor of $2^{-k \delta}$ for some $\delta>0$ in the statement below. 

\begin{lem}[Uniform deviation of self-normalized empirical processes]\label{lem:relbd} Let $\mu$ be any probability measure on $\R^d$. Then
\begin{align}
    &\P\left(\sup_{k\in \N}2^{-k \delta}\frac{(\mu(A^k)-\mu_N(A^k))_{+}}{\mu(A^k)^{\alpha}}>x\right)
    \le Ce^{-B_{\alpha}x^2 N^{2(1-\alpha\vee 1/2)}}\ind{\{x\le 1\}},\\
    &\P\left(\sup_{k\in \N}2^{-k \delta}\frac{(\mu_N(A^k)-\mu(A^k))_{+}}{\mu_N(A^k)^{\alpha}}>x\right)
    \le Ce^{-B_{\alpha}x^2 N^{2(1-\alpha\vee 1/2)}}\ind{\{x \le 1\}}\,
\end{align}
holds for all $N\in \N$, $\delta>0$, $\alpha\in (0,1)$ and $x>0$, where $B_{\alpha}:=1/2^{3+2\alpha}$ and $C=8e\vee \frac{16}{1-4^{-\delta}}$. Here $0/0$ is understood as $0$.
\end{lem}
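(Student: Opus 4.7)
My plan is to reduce all cases of $\alpha\in(0,1)$ to the base case $\alpha=1/2$, where single-$k$ Chernoff/Bernstein estimates combine with Lemma~\ref{lem:compute bd} to produce the uniform-in-$k$ bound. The pairing with Lemma~\ref{lem:compute bd} is enabled by the elementary inequality $4^{k\delta}\ge 1+k\cdot 2\delta\log 2$, so that any single-set estimate of the form $\P(Y_{k,N}>x)\le A e^{-cN\,4^{k\delta}x^2}$ fits the hypothesis of that lemma with $a=4^\delta$, $\beta=1$, $h(x)=cx^2$, producing $\P(\sup_k Y_{k,N}>x)\le Ce^{-cNx^2}$.

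For the base case $\alpha=1/2$ I use that $N\mu_N(A^k)\sim\mathrm{Bin}(N,\mu(A^k))$. The first (left-tail) direction follows from multiplicative Chernoff with $\lambda=2^{k\delta}x/\sqrt{\mu(A^k)}$, giving $e^{-N 4^{k\delta}x^2/2}$ when $\lambda\le 1$ and making the event infeasible otherwise. The second (right-tail) direction splits into two regimes. When $\mu(A^k)\ge 4^{k\delta}x^2/9$, Bernstein yields the analogous bound $e^{-N 4^{k\delta}x^2/4}$. In the complementary small-$\mu(A^k)$ regime, the event forces $\sqrt{\mu_N(A^k)}>2^{k\delta}x$ (since $\mu(A^k)\ge 0$), so its probability is bounded by $\P(\mu_N(A^k)>4^{k\delta}x^2)$; the direct Chernoff/Stirling estimate $\P(N\hat{p}\ge m)\le(eNp/m)^m$ with $m\ge N\cdot 4^{k\delta}x^2$ then gives $(e/9)^{Ns}=e^{-(\log 9-1)Ns}$ with $s=4^{k\delta}x^2$, which is of the required exponential order. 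The edge case $Ns<1$ is absorbed by the crude bound $\P\le Np$, which in this regime satisfies $Np\le 1/9$ and is dominated by the target.

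To extend to $\alpha\neq 1/2$ I use elementary pointwise inequalities. For $\alpha<1/2$ the bounds $\mu(A^k)^\alpha\ge\mu(A^k)^{1/2}$ and $\mu_N(A^k)^\alpha\ge\mu_N(A^k)^{1/2}$ (valid since $\mu,\mu_N\in[0,1]$) show that the $\alpha$-suprema are pointwise dominated by the $1/2$-suprema, and the Chernoff constant achieved in the base case exceeds the required $B_\alpha=1/2^{3+2\alpha}$, so the bound transfers directly. For $\alpha>1/2$, the inequality $(\mu-\mu_N)_+\le\mu$ yields
\[
\frac{(\mu-\mu_N)_+}{\mu^\alpha}\le\left(\frac{(\mu-\mu_N)_+}{\mu^{1/2}}\right)^{2(1-\alpha)},
\]
with an analogous inequality holding for the right tail. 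Applying the base-case bound with the rescaled parameters $\delta'=\delta/(2(1-\alpha))$ and $y=x^{1/(2(1-\alpha))}$ gives $Ce^{-cNx^{1/(1-\alpha)}}$; writing $u=xN^{1-\alpha}$, the target follows when $u\ge 1$ from $u^{1/(1-\alpha)}\ge u^2$, while the regime $u<1$ is handled trivially by choosing $C$ so that $Ce^{-B_\alpha u^2}\ge 1$ there.

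The main obstacle is the small-$\mu(A^k)$ regime of the right-tail step: Bernstein-type bounds degrade because the Binomial variance vanishes, and the resolution requires exploiting the structural consequence of the event that $\mu_N(A^k)$ itself must be large, together with the direct combinatorial estimate of the Binomial upper tail.
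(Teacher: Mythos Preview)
Your approach is correct and genuinely different from the paper's. The paper proves the lemma via the classical \emph{symmetrization} technique from empirical process theory: it introduces a ghost sample $\mu'_N$, uses the monotonicity of $y\mapsto (y-c)_+/y^\alpha$ together with $\P(\mu'_N(A^k)\ge\mu(A^k))\ge 1/4$ to pass to a two-sample quantity, inserts i.i.d.\ Rademacher signs, conditions on the data, and applies Hoeffding's inequality to the conditional Rademacher sum. The dependence on $\alpha$ then enters through the single observation $|J_k|^{2\alpha-1}\ge 1$ (for $\alpha\ge 1/2$) or $|J_k|^{2\alpha-1}\ge N^{2\alpha-1}$ (for $\alpha\le 1/2$), after which Lemma~\ref{lem:compute bd} handles the union over $k$ exactly as you do.

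Your route avoids symmetrization entirely: you work directly with the single-$k$ Binomial tails (multiplicative Chernoff for the left tail; Bernstein in the large-$p$ regime and the crude $\P(\mathrm{Bin}(N,p)\ge m)\le (eNp/m)^m$ bound in the small-$p$ regime for the right tail), and you reduce general $\alpha$ to $\alpha=1/2$ through the pointwise inequalities $p^\alpha\ge p^{1/2}$ (for $\alpha<1/2$) and $(p-\hat p)_+/p^\alpha\le\big((p-\hat p)_+/p^{1/2}\big)^{2(1-\alpha)}$ (for $\alpha>1/2$). This is more elementary and in fact yields sharper exponents than the stated $B_\alpha=1/2^{3+2\alpha}$, so the lemma follows a fortiori; the price is heavier case analysis, particularly the regime split in the right-tail step. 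The paper's symmetrization argument is more uniform in $\alpha$ and connects the result to the Vapnik--Chervonenkis tradition cited there, but your argument is self-contained and does not require that machinery. One small remark: the edge case $Ns<1$ in Regime~2 is actually unnecessary, since the real-valued Chernoff bound $\P(\mathrm{Bin}(N,p)\ge m)\le(eNp/m)^m$ holds for all real $m>Np$, giving $(ep/s)^{Ns}<(e/9)^{Ns}$ directly.
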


\begin{proof}
As $\alpha<1$, the ratios inside the probabilities are no greater than $1$. Hence, we may assume $x\le 1$ throughout the proof because the probabilities in the statement vanish if $x>1$. Let $X_1, X'_1, \dots, X_N, X'_N$ be i.i.d.~samples of $\mu$. Take $\mu_N=(1/N)\sum_{j=1}^N \delta_{X_j}$ and $\mu'_N=(1/N)\sum_{j=1}^N \delta_{X'_j}$. Using the fact that the function $y\mapsto \frac{(y-c)_{+}}{y^{\alpha}}$ is increasing for $c>0$ we have
\begin{align}
    \left\{2^{-k \delta}\frac{(\mu(A^k)-\mu_N(A^k))_{+}}{\mu(A^k)^{\alpha}}>x, \,\, \mu'_N(A^k)\ge \mu(A^k)\right\}
    \subseteq
    \left\{2^{-k \delta}\frac{(\mu'_N(A^k)-\mu_N(A^k))_{+}}{((\mu_N(A^k)+\mu'_N(A^k))/2)^{\alpha}}>x\right\}.
\end{align}
Taking a union over all $k\in \N$ on both sides, it follows from independence of $(X_1,\dots, X_N)$ and $(X_1', \dots, X_N')$ that
\begin{align}
    &\P\left(\sup_{k\in \N} 2^{-k \delta}\frac{(\mu(A^k)-\mu_N(A^k))_{+}}{\mu(A^k)^{\alpha}}>x\right)\P\left(\mu'_N(A^k)\ge \mu(A^k) \text{ for some } k\in \N\right)\\
    &\le \P\left(\sup_{k\in \N}2^{-k \delta}\frac{(\mu'_N(A^k)-\mu_N(A^k))_{+}}{((\mu_N(A^k)+\mu'_N(A^k))/2)^{\alpha}}>x\right).
\end{align}
Since $\mu'_N(A^k)$ follows a binomial distribution $\text{Bin}(N, \mu(A^k))$, $\P(\mu'_N(A^k))\ge \mu(A^k))\ge 1/4$. In particular,
\begin{align}
    \P\left(\sup_{k\in \N} 2^{-k \delta}\frac{(\mu(A^k)-\mu_N(A^k))_{+}}{\mu(A^k)^{\alpha}}>x\right)
    \le 4\P\left(\sup_{k\in \N}2^{-k \delta}\frac{(\mu'_N(A^k)-\mu_N(A^k))_{+}}{((\mu_N(A^k)+\mu'_N(A^k))/2)^{\alpha}}>x\right).\label{eq:relbd1}
\end{align}
Let $\sigma_1, \dots, \sigma_N$ be i.i.d.~Rademacher variables that are also independent of $X_1,\dots, X_N$ and $X'_1, \dots, X'_N$. By the independence lemma (see \cite[Lemma 2.3.4]{shreve2004stochastic}),
\begin{equation}\label{eq:relbe2}
\begin{aligned}
    &\P\left(\sup_{k\in \N}2^{-k \delta}\frac{(\mu'_N(A^k)-\mu_N(A^k))_{+}}{((\mu_N(A^k)+\mu'_N(A^k))/2)^{\alpha}}>x\right)\\
    &=\P\left(\sup_{k\in \N}2^{-k \delta}\frac{\left(\sum_{j=1}^N \sigma_j(\ind{\{X'_j\in A^k\}}-\ind{\{X_j\in A^k\}})\right)_{+}}{\left(\sum_{j=1}^N (\ind{\{X_j\in A^k\}}+\ind{\{X'_j\in A^k\}})\right)^{\alpha}}>\frac{x N^{1-\alpha}}{2^{\alpha}}\right)\\
    &\le 2\P\left(\sup_{k\in \N}2^{-k \delta}\frac{\abs{\sum_{j=1}^N \sigma_j\ind{\{X_j\in A^k\}}}}{\left(\sum_{j=1}^N (\ind{\{X_j\in A^k\}}+\ind{\{X'_j\in A^k\}})\right)^{\alpha}}>\frac{x N^{1-\alpha}}{2^{1+\alpha}}\right)\\
    &\le 2\P\left(\sup_{k\in \N}2^{-k \delta}\frac{\abs{\sum_{j=1}^N \sigma_j\ind{\{X_j\in A^k\}}}}{\left(\sum_{j=1}^N \ind{\{X_j\in A^k\}}\right)^{\alpha}}>\frac{x N^{1-\alpha}}{2^{1+\alpha}}\right)
    =2\E\left[g(X_1, \cdots, X_N)\right],
\end{aligned}
\end{equation}
where
\begin{align}
    g(x_1, \dots, x_N)
    :=\P\left(\sup_{k\in \N}2^{-k \delta}\frac{\abs{\sum_{j=1}^N \sigma_j\ind{\{x_j\in A^k\}}}}{\left(\sum_{j=1}^N \ind{\{x_j\in A^k\}}\right)^{\alpha}}>\frac{x N^{1-\alpha}}{2^{1+\alpha}}\right).
\end{align}
For each $k\in \N$, let us write $J_{k}:=\{1\le j\le N \st x_{j}\in A^{k}\}$. Then
\begin{align}
    \sup_{k\in \N}2^{-k\delta}\frac{\big|\sum_{j=1}^N \sigma_j\ind{\{x_j\in A^k\}}\big|}{\left(\sum_{j=1}^N \ind{\{x_j\in A^k\}}\right)^{\alpha}}
    =\sup_{k\in \N}\frac{2^{-k\delta}}{\abs{J_k}^{\alpha}}\Big|\sum_{j\in J_{k}} \sigma_j\Big|.
\end{align}
Note that $\sigma_j$ are bounded random variables. Applying Hoeffding's lemma (see \cite[Theorem 2.1]{devroye2001combinatorial}) we obtain
\begin{align}
    \P\left(
    \frac{2^{-k\delta}}{\abs{J_k}^{\alpha}}\Big|\sum_{j\in J_{k}} \sigma_j\Big|>\frac{x N^{1-\alpha}}{2^{1+\alpha}}\right)
    &\le 2\exp{\left({-4^{k\delta}B_{\alpha}x^2 N^{2(1-\alpha)}}\abs{J_k}^{2\alpha-1}\right)}\ind{\{\abs{J_{k}}\neq 0\}}\, \label{eq:hl}
\end{align}
for $B_{\alpha}=1/2^{3+2\alpha}$. Note that $\abs{J_k}\neq 0$ implies $\abs{J_k}\ge 1$. Hence if $\alpha\in [1/2, 1)$, $\abs{J_k}^{2\alpha-1}\ge 1$. From $\abs{J_k}\le N$ we deduce that $\abs{J_k}^{2\alpha-1}\ge N^{2\alpha-1}$ if  $\alpha\in (0, 1/2]$. Plugging this estimate back into \eqref{eq:hl},
\begin{align}
    \P\left(
    \frac{2^{-k\delta}}{\abs{J_k}^{\alpha}}\Big| \sum_{j\in J_{k}} \sigma_j \Big|>\frac{x N^{1-\alpha}}{2^{1+\alpha}}\right)
    &\le 2\exp{\left(-4^{k\delta}B_{\alpha}x^2N^{2(1-\alpha\vee 1/2)}\right)}\label{eq:hl2}
\end{align}
follows. Using $4^{k\delta}\ge 1+k\log(4^{\delta})$, the right hand side of \eqref{eq:hl2} is bounded by
\begin{align}
    2\exp{\left(-(1+k\log(4^{\delta})) B_{\alpha}x^2 N^{2(1-\alpha\vee 1/2)})\right)}.
\end{align}
Hence Lemma~\ref{lem:compute bd} gives
\begin{align}
    g(x_1, \cdots, x_N)\le \left(e\vee \frac{2}{1-4^{-\delta}}\right)\exp{\left(-B_{\alpha}x^2N^{2(1-\alpha\vee 1/2)}\right)}.
\end{align}
Combined with \eqref{eq:relbd1} and \eqref{eq:relbe2}, this proves the first estimate in the statement. The proof of the second estimate is essentially the same after changing the roles of $\mu$ and $\mu_N$, see \cite[Proof of Theorem 1]{bartlett1999inequality}.
\end{proof}

We now have all the ingredients needed for the proof of Theorem~\ref{thm:goal}.

\begin{proof}[Proof of Theorem \ref{thm:goal}]
Similarly to \cite[Lemma 5]{fournier2015rate} we write
\begin{align}
    \mu&=\sum_{k=0}^{\infty}(\mu(A^k)\wedge \mu_N(A^k))\mu^k
    +(\mu(A^k)-\mu_N(A^k))_{+}\mu^k,\\
    \mu_N&=\sum_{k=0}^{\infty}(\mu(A^k)\wedge \mu_N(A^k))\mu_N^k
    +(\mu_N(A^k)-\mu(A^k))_{+}\mu^k_N.
\end{align}
Let $\pi^k$ be an optimal transport plan for $\d_f(\mu^k,\mu^k_N)$. Let us define $\pi\in \sp(\R^d\times \R^d)$ via
\begin{align}
    \pi(dx, dy)
    :=\sum_{k=0}^{\infty}(\mu(A^k)\wedge \mu_N(A^k))\pi^k(dx,dy)
    +\frac{\nu(dx)\otimes \lambda(dy)}{\rho},
\end{align}
where 
\begin{align*}
\nu &:=\sum_{k=0}^{\infty}(\mu(A^k)-\mu_N(A^k))_{+}\mu^k,\\
\lambda&:=\sum_{k=0}^{\infty}(\mu_N(A^k)-\mu(A^k))_{+}\mu^k_N,\\
\rho&:=\sum_{k=0}^{\infty}(\mu(A^k)-\mu_N(A^k))_{+}=\sum_{k=0}^{\infty}(\mu_N(A^k)-\mu(A^k))_{+}.
\end{align*}
Clearly, $\pi\in \text{Cpl}(\mu, \mu_N)$. Recalling the (global) growth function $G$ from Assumption~\ref{ass:cst}$\ref{ass:loc g}$,
\begin{equation}\label{eq:bdd}
\begin{aligned}
    \d_f(\mu, \mu_N)
    &\le \int_{\R^d\times \R^d} f(\abs{x-y})d\pi(x,y)\\
    &=\sum_{k=0}^{\infty}(\mu(A^k)\wedge \mu_N(A^k))\d_f(\mu^k, \mu_N^k)
    +\frac{1}{\rho}\int_{\R^d\times \R^d}f(\abs{x-y})d\nu(x)d\lambda(y)\\
    &\le \sum_{k=0}^{\infty}(\mu(A^k)\wedge \mu_N(A^k))\d_f(\mu^k, \mu_N^k)
    +\int_{\R^d}G(\abs{x})d\nu(x)
    +\int_{\R^d}G(\abs{x})d\lambda(x)\,
\end{aligned}
\end{equation}
follows. Next we use the identity $(a-b)_{+}=a-(a\wedge b)$ to obtain
\begin{align}
    \int_{\R^d}G(\abs{x})d\nu(x)
    &=\sum_{k=0}^{\infty}\mu(A^k)M_1(\mu^k ; G)
    -\sum_{k=0}^{\infty}(\mu(A^k)\wedge \mu_N(A^k))M_1(\mu^k ; G)\\
    &=M_1(\mu ; G)-\sum_{k=0}^{\infty}(\mu(A^k)\wedge \mu_N(A^k))M_1(\mu^k ; G).
\end{align}
Similarly we obtain
\begin{align}
    \int_{\R^d}G(\abs{x})d\lambda(x)
    =M_1(\mu_N ; G)-\sum_{k=0}^{\infty}(\mu(A^k)\wedge \mu_N(A^k))M_1(\mu_N^k ; G).
\end{align}
In particular,
\begin{align}
    \int_{\R^d}G(\abs{x})d\lambda(x)
    =\int_{\R^d}G(\abs{x})d\nu(x)
    +\left(M_1(\mu_N ; G)-M_1(\mu ; G)\right)\\
    +\sum_{k=0}^{\infty}(\mu(A^k)\wedge \mu_N(A^k))\left(M_1(\mu^k ; G)-M_1(\mu_N^k ; G)\right).
\end{align}
Plugging this back into \eqref{eq:bdd} we have
\begin{equation}\label{eq:bdd2}
\begin{aligned}
    &\d_f(\mu, \mu_N)\\
    &\le \sum_{k=0}^{\infty}(\mu(A^k)\wedge \mu_N(A^k))\d_f(\mu^k, \mu_N^k)
    +\sum_{k=0}^{\infty}(\mu(A^k)\wedge \mu_N(A^k))\left(M_1(\mu^k ; G)-M_1(\mu_N^k ; G)\right)\\
    & \quad +2\int_{\R^d}G(\abs{x})d\nu(x)
    +\left(M_1(\mu_N ; G)-M_1(\mu ; G)\right) \\
    &=:\text{I}+\text{II}+2\, \text{III}+\text{IV}.
\end{aligned}
\end{equation}
Recall $c_0>0$ from Assumption~\ref{ass:cst}. Note that on the set $\Omega_{\text{I}}$ which is defined via
\begin{align}
    \Omega_{\text{I}}
    :=\left\{\sup_{k\in \N}\left(\frac{2^{-kc_0}}{g(2^{k+1})}\big(\mu(A^k)\wedge \mu_N(A^k)\big)^{\eta}\d_f(\mu^k, \mu^k_N)\right)\le x\right\}\,
\end{align}
we can use the bound
\begin{align}
    \text{I}\le x\sum_{k=0}^{\infty}2^{k c_0}g(2^{k+1})(\mu(A^k)\wedge \mu_N(A^k))^{1-\eta}.
\end{align}
Recalling $\mathcal{K}_g$ and $S$ from Assumption~\ref{ass:cst} and using $M_1(\mu; S)=\int_{\R^d}S(\abs{x})d\mu(x)\ge S(2^{k-1})\mu(A^k)$ for $k\ge 1$, we obtain 
\begin{align}
    \text{I}
    &\le x\sum_{k=0}^{\infty}2^{k c_0}g(2^{k+1})(\mu(A^k)\wedge \mu_N(A^k))^{1-\eta}\\
    &\le x\sum_{k=0}^{\infty}2^{k c_0}g(2^{k+1})(\mu(A^k))^{1-\eta}
    \le x\left(1\vee M_1(\mu; S)\right)^{1-\eta}\mathcal{K}_g\,
\end{align}
for $k\ge 1$.
We bound $\text{II}$ in the similar way. Indeed, on the set
\begin{align}
    \Omega_{\text{II}}
    :=\left\{\sup_{k\in \N}\left(\frac{2^{-kc_0}}{2G(2^k)}(\mu(A^k)\wedge \mu_N(A^k))^{\eta}\abs{M_1(\mu^k ; G)-M_1(\mu^k_N ; G)}\right)\le x\right\}\,
\end{align}
we compute using $\mathcal{K}_G$ from Assumption~\ref{ass:cst} that
\begin{align}
    \text{II}
    &\le 2x\sum_{k=0}^{\infty}2^{kc_0}G(2^k)(\mu(A^k)\wedge\mu_N(A^k))^{1-\eta}\\
    &\le 2x\sum_{k=0}^{\infty}2^{kc_0}G(2^k)(\mu(A^k))^{1-\eta}
    \le 2x(1\vee M_1(\mu; S))^{1-\eta}\mathcal{K}_G.
\end{align}
Finally, we estimate $\text{III}$. For small $\delta>0$ that will be chosen later and for $\ep\in (0, 1-1/\gamma)$, let us define
\begin{align}
    \Omega_{\text{III}}:=\left\{\sup_{k\in \N}2^{-k\delta}\frac{(\mu(A^k)-\mu_N(A^k))_{+}}{(\mu(A^k))^{1/\gamma+\ep}}\le x\right\}.
\end{align}
On the set $\Omega_{\text{III}}$, we have
\begin{align}
    \int_{\R^d}G(\abs{x})d\nu(x)
    &=\sum_{k=0}^{\infty}(\mu(A^k)-\mu_N(A^k))_{+}M_1(\mu^k; G)\\
    &\le x\sum_{k=0}^{\infty}2^{k\delta}(\mu(A^k))^{1/\gamma+\ep}M_1(\mu^k; G) \\
    &=x\sum_{k=0}^{\infty}2^{k\delta}\mu(A^k)^{\ep}\cdot \mu(A^k)^{1/\gamma}M_1(\mu^k; G).
\end{align}
Applying H\"older's inequality  with an exponent $1/\gamma'+1/\gamma=1$ to the last series and Jensen's inequality to obtain $(M_1(\mu^k; G))^{\gamma}\le M_{\gamma}(\mu^k; G)$, we conclude that
\begin{align}
    \int_{\R^d}G(\abs{x})d\nu(x)
    &\le x\left(\sum_{k=0}^{\infty}2^{k\delta\gamma/(\gamma-1)}\mu(A^k)^{\gamma\ep/(\gamma-1)}\right)^{1-1/\gamma}\left(\sum_{k=0}^{\infty} \mu(A^k)(M_1(\mu^k; G))^{\gamma}\right)^{1/\gamma}\\
    &\le x\left(\sum_{k=0}^{\infty}2^{k\delta\gamma/(\gamma-1)}\mu(A^k)^{\gamma\ep/(\gamma-1)}\right)^{1-1/\gamma}(M_{\gamma}(\mu; G))^{1/\gamma}.
\end{align}
Let us choose $\delta<p\ep$ so that $p\gamma\ep/(\gamma-1)-\delta\gamma/(\gamma-1)>0$. It follows from Markov's inequality that $\mu(A^k)\le 2^{-p(k-1)}M_{p}(\mu)$ for $k\ge 1$, which yields
\begin{align}
    &\left(\sum_{k=0}^{\infty}2^{k\delta\gamma/(\gamma-1)}\mu(A^k)^{\gamma\ep/(\gamma-1)}\right)^{1-1/\gamma}\\
    &\le \left(1+(M_{p}(\mu))^{\gamma\ep/(\gamma-1)}2^{p\gamma\ep/(\gamma-1)}\sum_{k=1}^{\infty}2^{-k(p\gamma\ep/(\gamma-1)-\delta\gamma/(\gamma-1))}\right)^{1-1/\gamma}\\
    &\le C(1\vee M_{p}(\mu))^{\ep}\,
\end{align}
for some $C>0$ which depends only on $p, \gamma, \ep$. Plugging these estimates back into \eqref{eq:bdd2}, a union bound gives
\begin{align}
    &\P\bigg(\d_f(\mu, \mu_N)>x(1\vee M_1(\mu; S))^{1-\eta}\mathcal{K}_g+2x(1\vee M_1(\mu; S))^{1-\eta}\mathcal{K}_G \\
    &\hspace{5cm}+Cx(1\vee M_p(\mu))^{\ep}(M_{\gamma}(\mu; G))^{1/\gamma}+x\bigg)\\
    &\le \P(\Omega_{I}^c)
    +\P(\Omega_{II}^c)
    +\P(\Omega_{III}^c)
    +\P(M_1(\mu_N; G)-M_1(\mu; G)>x).
\end{align}
Scaling $x$ in $\Omega_{\text{I}}$, $\Omega_{\text{II}}$ and $\Omega_{\text{III}}$ which only affects the constants $c$ and $A_0$, Lemmas~\ref{lem:cpt union bd},~\ref{lem:cpt union bd2} and~\ref{lem:relbd} together yield
\begin{align}
    \P\left(\d_f(\mu, \mu_N)>Fx\right)
    &\le C\left(e^{-cN\varphi_{\eta}(x)}+Ce^{-cN^{2(1-(\ep+1/\gamma)\vee(1/2))}x^2}\right)\ind{\{x\le A_0\}}\\
    &+\P\left(M_1(\mu_N ; G)-M_1(\mu ; G)>x\right),
\end{align}
where
\begin{align}
    F:=(1\vee M_1(\mu; S))^{1-\eta}+(1\vee M_p(\mu))^{\ep}(M_{\gamma}(\mu; G))^{1/\gamma}.
\end{align}
If $\gamma>2$, we choose $\ep=1/2-1/\gamma$. We can replace $F$ by $F_N$ after changing the roles of $\mu$ and $\mu_N$. This proves the first part of Theorem. If $\gamma\le 2$, then we have $\ep+1/\gamma> 1/2$. Thus the second part of Theorem follows. The dependence of constants $c, C$ and $A_0$ on the parameters stated in the theorem can be easily checked from Lemmas~\ref{lem:cpt union bd},~\ref{lem:cpt union bd2} and~\ref{lem:relbd} together with Lemma~\ref{lem:phi}$\ref{lem:phi c}$. This concludes the proof.
\end{proof}

\noindent\textbf{Acknowledgement}
Johannes Wiesel acknowledges support by NSF Grant DMS-2345556.

\noindent\textbf{Declarations of interest} None.

\begin{small}
\bibliographystyle{abbrv}
\bibliography{Concentration2023}
\end{small}

\end{document}